\def\E{{\cal E}}
\def\I{{\cal I}}\def\L{{\cal L}}
\def\M{{\cal M}}
\def\Rs{{\mathbb{R}}}
\def\Tr{{\rm T}}
\def\beq{\begin{equation}\label}\def\eeq{\end{equation}}
\def\bea{\begin{eqnarray}\label}\def\eea{\end{eqnarray}}
\def\bas{\begin{eqnarray*}}\def\eas{\end{eqnarray*}}
\def\bt{\begin{theorem}\label}\def\et{\end{theorem}}
\def\bl{\begin{lemma}\label}\def\el{\end{lemma}}
\def\bd{\begin{definition}\label}\def\ed{\end{definition}}
\def\bex{\begin{example}\label}\def\ee{\end{example}}
\def\bc{\begin{corollary}\label}\def\ec{\end{corollary}}
\def\bp{\begin{proposition}\label}\def\ep{\end{proposition}}
\def\bpr{\begin{problem}\label}\def\epr{\end{problem}}
\def\bass{\begin{assumption}\label}\def\eass{\end{assumption}}
\def\btbl{\begin{table}[ht]\label}\def\etbl{\end{table}}
\def\la{\lambda}
\def\f{\varphi}
\def\De{\Delta}
\def\Om{\Omega}
\def\oti{\otimes}
\def\ti{\times}
\def\Inv{{\rm Inv}}
\def\Tr{^\top}\def\Inv{^{-1}}
\def\hatr{\begin{array}}\def\ear{\end{array}}
\def\bas{\begin{eqnarray*}}\def\eas{\end{eqnarray*}}
\def\bea{\begin{eqnarray}\label}\def\eea{\end{eqnarray}}
\def\beq{\begin{equation}\label}\def\eeq{\end{equation}}
\def\bde{\begin{description}}\def\ede{\end{description}}
\def\ben{\begin{enumerate}}\def\een{\end{enumerate}}
\def\bit{\begin{itemize}}\def\eit{\end{itemize}}
\def\ben{\begin{enumerate}}\def\een{\end{enumerate}}
\def\bc{\begin{center}}\def\ec{\end{center}}
\def\mat{\left[\begin{array}}\def\rix{\end{array}\right]}
\def\nn{\nonumber}
\def\Tr{^\top}
\def\beq{\begin{equation}\label}\def\eeq{\end{equation}}
\def\bea{\begin{eqnarray}\label}\def\eea{\end{eqnarray}}
\def\bas{\begin{eqnarray*}}\def\eas{\end{eqnarray*}}
\def\I2{^{-2}}
\def\*{^\ast}
\title{ Approximate solutions to large nonsymmetric differential  Riccati problems with applications to transport theory}
\author{V.  Angelova\thanks{Department of Intelligent Systems,
		Institute of Information and Communication Technologies, Bulgarian Academy of Sciences,
		Akad. G. Bonchev, bl. 2,
		Sofia 1113, Bulgaria} \and M. Hached\thanks{ Laboratoire P.  Painlev\'e UMR 8524, UFR de Math\'{e}matiques, Universit\'e des Sciences et Technologies de Lille, IUT A, Rue de la Recherche, BP 179, 59653 Villeneuve d'Ascq Cedex, France} 
\and K. Jbilou \thanks{LMPA, 50 rue F. Buisson, ULCO Calais, France}   }
\date{}
\begin{document}
\maketitle

\begin {abstract}
In the present paper, we consider large scale nonsymmetric differential matrix Riccati equations with low rank right hand sides. These matrix equations appear in many applications such as control theory,  transport theory, applied probability and others. We show how to apply  Krylov-type  methods such as the extended block Arnoldi algorithm to get low rank approximate solutions. The initial problem is projected onto small subspaces to get low dimensional nonsymmetric differential equations that are solved using the exponential approximation or via other integration schemes such as Backward Differentiation Formula (BDF) or Rosenbrok method. We also show how these technique could be easily used to solve some problems from  the well known transport equation. Some numerical experiments are given to illustrate the application of the proposed methods to large-scale problems.
\end {abstract}

\begin{keywords}
 Extended block Arnoldi, Low-rank approximation,  differential  Riccati equation,  Transport theory.
\end{keywords}

%*******************************************************************
%AMS SUBJECT CLASSIFICATION
%*******************************************************************
\noindent {\bf AMS subject classification: 65F10, 65F30} .
%*******************************************************************
%DO NOT FORGET TO RESET THE EQUATION COUNTER TO 0 AT THE HEAD OF EACH SECTION
%*******************************************************************
\section{Introduction}

Consider the nonsymmetric differential  Riccati equation 
\begin{equation}\label{ric1}
\left\{\begin{aligned}
\dot  X(t) &=- AX(t)-X(t)D+ X(t) S X(t) + Q ,\;\; {\rm  (NDRE)}\\
X(0) &=X_0,\\
\end{aligned}
\right.
\end{equation}
where $ A \in \mathbb{R}^{n \times n} $, $ D \in \mathbb{R}^{p \times p} $, $ Q \in
\mathbb{R}^{n \times p} $, $ S \in \mathbb{R}^{p \times n} $ and $X(t)  \in \mathbb{R}^{n \times p} $ with $t \in [t_0\,,\, t_f]$.\\
The equilibrum solutions of \eqref{ric1} are the solutions of the corresponding  nonsymmetric algebraic Riccati equation
\begin{equation}\label{ric2}
- AX -XD+ X S X + Q=0.\;\; {\rm  (NARE)}\\
\end{equation}
Differential nonsymmetric  Riccati equations (NDREs) play a fundamental role in many areas such as
 transport theory, fluid queues models, 
 variational theory, optimal control and filtering, $H_1$-control, invariant embedding and scattering
processes, dynamic programming and differential games, \cite{abou,guo1,juang1,Reid,rogers}.\\
For NAREs many numerical methods have been studied for finding the minimal nonnegative solution $X^*$.   The Newton method has been  studied   in \cite{bini3,guo1,guohig}, however since  it requires at each step the solution of a Sylvester equation, the method could be expensive when direct solvers are  used.  Generally, fixed point iteration methods  \cite{abou,guo1,guohig} are less expensive than the Newton or the Schur method. Some acceleration techniques based on vector extrapolation methods \cite{jbisad} have  been proposed  in \cite{ELMoallem} to speed up the convergence of some of these fixed point iterative methods such as those introduced in \cite{lu1,lu2}.  For large problems, some Krylov-based methods have been studied in \cite{bjs}. \\For NDREs and to our  knowledge there is no existing  method in the large scale case. In this paper, we consider large scale NDREs with low rank right-hand sides. We will show how to apply the extended block Arnoldi algorithm \cite{heyouni,simoncini}  to get low rank approximate solutions. We will treat the special case corresponding to NDREs from transport theory.  \\

\noindent The paper is organized as follows: In Section 2, we will be interested in the existence of exact solutions to equation \eqref{ric1}. In Section 3, we will see how to apply the extended block Arnoldi process to get low rank approximate solutions to NDREs with  low rank right hand sides.  We give different ways for solving the obtained projected low dimensional NDREs. Some convergence and perturbation results are developed in this section. In Section 4, we investigate the BDF-Newton method for solving the problem \eqref{ric1}. Section 5 is devoted to the special case where equation \eqref{ric1} comes from transport theory. In  the last section we give  some numerical experiments.\\

\noindent Throughout this paper, we use the following notations:  The matrix $I_n$ will denote the identity matrix of size $n \times n$. The  2-norm is denoted by $\parallel .  \parallel_2$ . 

\section{Exact solutions to NDRE's}
We first  need to recall some relevant definitions
\begin{definition}\label{def1}
\begin{enumerate}
\item For any real  matrices $M=[m_{ij}]$ and $N=[n_{ij}]$ with the same size, we write $M \ge N$ if $m_{ij} \ge n_{ij}$. 
%\item A real square matrix $M$ is called a Z-matrix if all its off-diagonal elements are nonpositive
% which means that $M=sI-H$ with $H \ge 0$.
\item A real square matrix $M$ is said M-matrix if $M=sI-H$ with $H \ge 0$ and $s \ge \rho(H)$ where $\rho(.)$ denotes the spectral radius. An $M$-matrix $M$ is nonsingular if $s > \rho(H)$.
\end{enumerate}
\end{definition}

\medskip 
\noindent Let $\cal L$ be the following matrix
\begin{equation}\label{matM}
{\cal L} = \left (
\begin{array}{cc}
D & -S\\
-Q & A\\
\end{array}
\right ).
\end{equation}

\noindent In this paper, we assume that the matrix ${\cal L} $ is a nonsingular  M-matrix. It follows that the matrices $A$ and $D$ are both nonsingular M-matrices; see \cite{fguo}.\\
We notice that the special structure of the matrix ${\cal L}$  ensures  the existence of the minimal nonnegative  solution $X^*$ such that  $ X^* \ge  0$ and   $X \ge X^*$ for any solution $X$ of the NARE (\ref{ric1}),  see \cite{bini1,guo1,guohig} for more details. 

\noindent A solution of \eqref{ric2} can be expressed in the following form 
\begin{equation}
\label{sol1}
X(t)=e^{-tA} X_0 e^{-tD} + \int_0^t e^{-(t-\tau)A}Qe^{-(t-\tau)D}\, d\tau + \int_0^t e^{-(t-\tau)A}X(\tau) S X(\tau)e^{-(t-\tau)D}\, d\tau .
\end{equation}
The proof is easily done by differentiation. Now as the matrices $A$ and $D$ are also nonsingular M-matrices, they can be expressed as $A=A_1-A_2$ and $D=D_1-D_2$ where $A_2$, $D_2$ are positive matrices and $A_1$ and $A_2$ are nonsingular M-matrices. Therefore, a solution of \eqref{ric1} can be expressed as follows (see \cite{juang})
\begin{equation}
\label{sol2}
X(t)=e^{-tA_1} X_0 e^{-tD_1} + \int_0^t e^{-(t-\tau)A_1} (X(\tau) S X(\tau) +A_2X(\tau)+  X(\tau) D_2 +Q  )e^{-(t-\tau)D_1}\, d\tau.
\end{equation}
Since $\cal L$ is assumed to be a nonsingular M-matrix, then it has been proved in \cite{fguo}, by using a Picard iteration, that if $0 \le X_0 \le X^*$ where $X^*$ is a nonnegative solution of \eqref{ric2}, then there exists a global solution $X(t)$ of \eqref{ric1}.\\
%**% $0 \ le X_0 \le X'^*$ corrige en $0 \le X_0 \le X'^*$
It is also well known \cite{abou} that the NDRE \eqref{ric1} is related to the initial value problem
\begin{equation}\label{vp}
\left (
\begin{array}{ll}
\dot Y(t) \\
\dot Z(t) 
\end{array} 
\right ) =
\left (
\begin{array}{ll}
D & -S \\
Q & -A
\end{array} 
\right ) \; 
\left (
\begin{array}{ll}
Y(t) \\
Z(t) 
\end{array} 
\right ) ,\; Y(0)=I,\, Z(0)=X_0, 
\end{equation}
where $Y(t) \in \mathbb{R}^{p \times p}$ and $Z(t) \in \mathbb{R}^{n \times p}$. The solution of the differential linear system  \eqref{vp} is given by
\begin{equation}\label{vp2}
\left (
\begin{array}{ll}
Y(t) \\
Z(t) 
\end{array} 
\right ) = e^{t {\cal H}} \,
\left (
 \begin{array}{ll}
I\\
X_0 
\end{array} 
\right ),
\end{equation}
where $$ {\cal H}=\left (
\begin{array}{ll}
D & -S \\
Q & -A
\end{array} 
\right ).$$
Therefore, using the Radon's lemma (see \cite{abou}), we can state the following result \cite{fguo}
\begin{theorem}
\label{radon}
The problem \eqref{ric1} is equivalent to solving the linear system of differential equations \eqref{vp}.
%**%  \eqref{vp1}
 If the solution $X(t)$ exists on  $[0,\, \infty [$ then the solution  $Y(t)$ obtained from the problem \eqref{vp} is nonsingular and in this case  $$X(t)=Z(t)Y^{-1}(t).$$  
\end{theorem}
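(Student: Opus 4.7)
The plan is to verify both directions of the equivalence by direct differentiation, in the style of Radon's lemma.

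For the forward direction, starting from a solution $X(t)$ of \eqref{ric1}, I will introduce an auxiliary $Y(t)$ as the unique solution of the linear matrix ODE $\dot Y = (D - SX)Y$ with $Y(0) = I_p$, defined on the same interval on which $X$ is continuous. Since this is a homogeneous linear ODE with continuous coefficients, Liouville's formula yields $\det Y(t) = \exp\!\bigl(\int_0^t \tr(D - SX(\tau))\,d\tau\bigr)$, so $Y(t)$ is nonsingular throughout. Setting $Z(t) := X(t)Y(t)$ and computing $\dot Z = \dot X\,Y + X\,\dot Y$, substitution of the NDRE for $\dot X$ produces the cancellation $-XDY + XDY = 0$, and the identity $XSXY = XSZ$ gives $\dot Z = -AXY + QY = QY - AZ$. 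Together with $\dot Y = DY - SZ$ and the initial data $Y(0)=I$, $Z(0)=X_0$, this shows $(Y,Z)$ solves \eqref{vp}. Uniqueness of solutions of the linear Cauchy problem \eqref{vp} then identifies this pair with the one given in closed form by \eqref{vp2}, hence the $Y(t)$ appearing in \eqref{vp2} is nonsingular wherever $X(t)$ is defined, and $X(t) = Z(t)Y(t)^{-1}$.

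For the reverse direction, given $(Y,Z)$ solving \eqref{vp} on an interval where $Y$ is invertible, I set $X := ZY^{-1}$ and differentiate the identity $Z = XY$ to obtain $\dot X\,Y = \dot Z - X\dot Y$. Substituting $\dot Z = QY - AZ$ and $\dot Y = DY - SZ$ and then multiplying on the right by $Y^{-1}$, the system collapses via $ZY^{-1} = X$ and $XSZ\,Y^{-1} = XSX$ to $\dot X = -AX - XD + XSX + Q$, while the initial condition is automatic: $X(0) = Z(0)Y(0)^{-1} = X_0$. This establishes the equivalence.

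The only non-formal step is the nonsingularity of $Y(t)$, which relies on the hypothesis that $X(t)$ exists globally, ensuring that the coefficient $D - SX(t)$ of the auxiliary linear ODE is continuous on $[0,\infty)$ and hence that its fundamental matrix never degenerates. Everything else reduces to bookkeeping of derivatives and substitutions in a linear system, so no serious analytical obstacle is anticipated.
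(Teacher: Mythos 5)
Your proof is correct and is precisely the standard Radon's-lemma argument that the paper invokes by citation to \cite{abou} and \cite{fguo} without writing it out: the auxiliary linear ODE $\dot Y=(D-SX)Y$, nonsingularity via Liouville's formula, the product rule computation for $Z=XY$, uniqueness of the linear Cauchy problem, and the reverse substitution $X=ZY^{-1}$ all check out with the paper's sign conventions. Nothing further is needed.
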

Using this theorem, we obtain the following result \cite{fguo}
\begin{theorem}
\label{fg}
Assume that $\cal L$ is a nonsingular $M$ matrix. If $0 \le X_0 \le X^*$ where $X^*$ is the minimal nonnegative solution of \eqref{ric2}, then the solution  $X(t)$ of \eqref{ric1}  converges to $X^*$ as $t \longrightarrow \infty$.
\end{theorem}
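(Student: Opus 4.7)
The plan is to use Theorem \ref{radon} to linearize the NDRE and then to exploit the structure of the coefficient matrix $\cal H$ through a similarity transformation built from the minimal nonnegative solution $X^*$. Setting
$$W = \begin{pmatrix} I_p & 0 \\ X^* & I_n \end{pmatrix}, \qquad W^{-1} = \begin{pmatrix} I_p & 0 \\ -X^* & I_n \end{pmatrix},$$
a direct computation gives
$$W^{-1}{\cal H}W = \begin{pmatrix} D - SX^* & -S \\ -AX^* - X^*D + X^*SX^* + Q & -(A - X^*S) \end{pmatrix}.$$
Because $X^*$ satisfies \eqref{ric2}, the $(2,1)$ block vanishes and $W^{-1}{\cal H}W$ is block upper triangular with diagonal blocks $D - SX^*$ and $-(A - X^*S)$.

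Next, I would invoke the fact (shown in \cite{fguo} under the nonsingular $M$-matrix hypothesis on $\cal L$) that $D - SX^*$ and $A - X^*S$ are themselves nonsingular $M$-matrices. In particular, their eigenvalues have strictly positive real parts and the semigroups $e^{-t(A-X^*S)}$ and $e^{-t(D-SX^*)}$ are entrywise nonnegative and decay exponentially to zero. With the change of variables $\tilde Y = Y$ and $\tilde Z = Z - X^*Y$, the decoupled system reads
$$\dot{\tilde Z} = -(A - X^*S)\,\tilde Z, \qquad \dot{\tilde Y} = (D-SX^*)\,\tilde Y - S\,\tilde Z,$$
with $\tilde Y(0) = I$ and $\tilde Z(0) = X_0 - X^* \le 0$. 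The first equation integrates to $\tilde Z(t) = e^{-t(A-X^*S)}(X_0 - X^*)$, which tends to zero exponentially while remaining entrywise nonpositive.

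Since the hypothesis $0 \le X_0 \le X^*$ guarantees via \cite{fguo} that $X(t)$ exists on all of $[0,\infty)$, Theorem \ref{radon} ensures that $Y(t) = \tilde Y(t)$ is nonsingular for every $t \ge 0$, and
$$X(t) - X^* = \tilde Z(t)\,\tilde Y(t)^{-1}.$$
To control $\tilde Y(t)^{-1}$ I would use variation of parameters,
$$\tilde Y(t) = e^{t(D-SX^*)}\left[I - \int_0^t e^{-\tau(D-SX^*)} S\,\tilde Z(\tau)\,d\tau\right].$$
Since $S$ and $e^{-\tau(D-SX^*)}$ are nonnegative while $\tilde Z(\tau) \le 0$, the bracketed factor has the form $I + (\mbox{nonneg})$, is nonsingular, and converges as $t\to\infty$ because its integrand decays exponentially. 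Thus $\tilde Y(t)^{-1}$ inherits the exponential decay of $e^{-t(D-SX^*)}$, and multiplying by the decaying $\tilde Z(t)$ yields $X(t) - X^* \to 0$. The main obstacle is establishing the $M$-matrix property of $D - SX^*$ and $A - X^*S$: this is not a direct computation but rests on the minimality and nonnegativity of $X^*$, and is the place where the full strength of the $M$-matrix structure of $\cal L$ enters.
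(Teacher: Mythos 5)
Your overall route---Radon's lemma combined with the block triangularization of ${\cal H}$ by $W=\left(\begin{smallmatrix} I&0\\ X^*&I\end{smallmatrix}\right)$---is sound, and it is essentially the linearization the paper itself gestures at before deferring to \cite{fguo} (the paper gives no proof of its own beyond this citation). The computation of $W^{-1}{\cal H}W$, the decoupled system for $(\tilde Y,\tilde Z)$, the identity $X(t)-X^*=\tilde Z(t)\,\tilde Y(t)^{-1}$, and the appeal to the (nontrivial but standard) fact that $A-X^*S$ and $D-SX^*$ are nonsingular $M$-matrices are all correct, and you rightly flag the last point as the place where the $M$-matrix hypothesis is really used. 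The genuine gap is in the final step. Writing $\tilde Y(t)=e^{t(D-SX^*)}B(t)$ with $B(t)=I-\int_0^t e^{-\tau(D-SX^*)}S\tilde Z(\tau)\,d\tau$, you assert that $B(t)$ ``has the form $I+(\mbox{nonneg})$, is nonsingular,'' and conclude that $\tilde Y(t)^{-1}=B(t)^{-1}e^{-t(D-SX^*)}$ decays. But an entrywise nonnegative perturbation of the identity need not be nonsingular (e.g.\ $N=\left(\begin{smallmatrix}0&1\\1&0\end{smallmatrix}\right)$ gives singular $I+N$), so that inference is invalid; and what your conclusion actually requires is not nonsingularity of each $B(t)$ (which you already have from Radon's lemma) but a \emph{uniform} bound on $\|B(t)^{-1}\|$ as $t\to\infty$, i.e.\ nonsingularity of the limit $B_\infty$. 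If $B_\infty$ were singular, $\|B(t)^{-1}\|$ would grow exponentially and could swamp the decay of $\tilde Z(t)$ and of $e^{-t(D-SX^*)}$.

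The missing fact is true, but it needs an extra ingredient. Expanding the solution of \eqref{vp} along the two invariant subspaces of ${\cal H}$ spanned by $\left(\begin{smallmatrix}I\\X^*\end{smallmatrix}\right)$ and $\left(\begin{smallmatrix}Y^*\\I\end{smallmatrix}\right)$, where $Y^*$ is the minimal nonnegative solution of the dual equation $YQY-YA-DY+S=0$, one finds $B_\infty=(I-Y^*X^*)^{-1}(I-Y^*X_0)$; its nonsingularity follows from $0\le Y^*X_0\le Y^*X^*$ together with $\rho(Y^*X^*)<1$, which holds precisely because ${\cal L}$ is a \emph{nonsingular} $M$-matrix. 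Alternatively, you can bypass $\tilde Y(t)^{-1}$ altogether: the Picard-iteration existence result you invoke also gives $0\le X(t)\le X^*$ for all $t$, and the integral form of the error equation $\dot{\cal E}=-(A-X^*S){\cal E}-{\cal E}(D-SX^*)+{\cal E}S{\cal E}$ then yields $e^{-t(A-X^*S)}(X_0-X^*)e^{-t(D-SX^*)}\le{\cal E}(t)\le 0$ entrywise (the integral term is nonnegative because ${\cal E}\le 0$, $S\ge 0$, and the two semigroups are nonnegative), so ${\cal E}(t)\to 0$ by squeezing. Either repair closes the argument; as written, the last step does not.
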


\section{Low rank approximate solutions to large NDREs via projection}
\subsection{The approximate solutions}
From now on, we assume that the constant matrix term $Q$ in \eqref{ric1} has a low rank and is decomposed as $Q=FG^T$ and $X_0=Z_{0,1} Z_{0,2}^T$ where $F, Z_{0,1}  \in \mathbb{R}^{n \times s}$ and $G, Z_{0,2} \in \mathbb{R}^{p \times s}$  with $ s \ll n$.  The  approach that we will consider in this section, consists in projecting the problem \eqref{ric1} onto a suitable subspace, solve the obtained low order problem and then get an approximate solution to the original problem. \\

\noindent We first recall the extended block Arnoldi process applied to the
pair $ (A,V) $ where $ A \in \mathbb{R}^{n \times n} $ is assumed to be nonsingular, and $ V \in \mathbb{R}^{n \times s} $ with $s \ll  n$. The projection subspace ${\cal K}_m(A,V)  \subset  \mathbb{R}^n $ that we will consider was
introduced in \cite{druskin,simoncini} and applied for solving large scale  symmetric differential and algebraic matrix Riccati equations in \cite{ghachedjbilou,heyouni} and for solving large scale Lyapunov matrix equations in \cite{simoncini}. This extended block Krylov subspace is given as
\begin{equation*}
 {\cal K}_m(A,V) = Range([A^{-m}V,\ldots,A^{-2}\,V,A^{-1}\,V,V,A\,V, A^2\,V, \ldots,A^{m-1}\,V]).
 \end{equation*}

\noindent The Extended Block Arnoldi (EBA) algorithm allows  the computation of  an orthonormal basis of the extended Krylov subspace $ {\cal K}_m(A,V) $. This basis contains information on both $ A $ and $ A^{-1} $. Let $ m $ be some fixed integer which limits the dimension of the constructed basis. 
The obtained blocks
$V_1,V_2,\ldots,V_m$,  ($ V_ i \in \mathbb{R}^{n \times 2s} $) have their columns mutually orthogonal provided no breakdown occurs.   After $ m $ steps, the extended block Arnoldi algorithm  builds an orthonormal basis $ {\cal V}_m = \left [V_1,\ldots,V_m \right]$ of the extended block Krylov subspace $ {\cal K}_m(A,V) $.\\

% and a block upper Hessenberg matrix $ H_m $ whose non zeros blocks are the $ H_{i,j}$. \\
\noindent Let the matrix  $ {\cal T}_m^A \in \mathbb{R}^{2ms \times 2ms} $ denotes the
restriction of the matrix $ A $ to the extended Krylov subspace $
{\cal K}_m(A,V) $, i.e., $ {\cal T}_m^A = {\cal V}_m^T\,A\,{\cal
V}_m $. It is shown in \cite{simoncini} that $ {\cal T}_m^A $ is
a block upper Hessenberg matrix with $ 2s \times 2s $ blocks and whose elements could  be obtained recursively from  EBA.
%Moreover, a recursion is derived to compute $ {\cal T}_m $ from $
%H_m $ without requiring matrix-vector products with $ A $; see \cite{simoncini}. \\
%
Let $ {\overline{\cal T}}_m^A= {\cal V}_{m+1}^T\,A\,{\cal V}_m $, and suppose that $ m $ steps of EBA  have been  run, then we have \cite{heyouni}: 
%**% \cite{heyouni1}  corrige en \cite{heyouni}
\begin{equation}\label{rela1}
A\,{\cal V}_m = {\cal V}_{m+1}\,{\overline{\cal T}}_m^A= {\cal V}_m\,{\cal T}_m^A + V_{m+1}\,T_{m+1,m}^A\,E_m^T,
\end{equation}
and
$$A^{-1}\,{\cal V}_m ={\cal V}_{m+1}\,{\overline{\cal L}}^A_m = {\cal V}_m\,{\cal L}^A_m + V_{m+1}\,L_{m+1,m}^A\,E_m^T,$$
with $\overline{\cal L}^A_m = {\cal V}_{m+1}^{T}\,A^{-1}\,{\cal V}_m$ and ${\cal L}^A_m = {\cal V}_{m}^{T}\,A^{-1}\,{\cal V}_m$, 
where $ T_{m+1,m}^A $ and $ L_{m+1,m}^A $   are the  $ (m+1,m) $-block (of size $ 2s \times 2s $) of ${\overline{\cal T}}^A_m$ and $ {\overline{\cal L}}_m^A $, respectively and $ E_m = [ O_{2s \times 2(m-1)s}, I_{2s} ]^T $ is the matrix of the last $ 2s $ columns of the $ 2ms \times 2ms $ identity matrix $ I_{2ms} $. 
%Next, we will use the extended block  Krylov subspace as the subspace of projection and EBA to construct an orthonormal basis of this block subspace. \\

\noindent  We notice that as EBA requires mat-vec products with the matrices $A$ and $A^{-1}$, so  if the matrix $A$ is singular or when solving linear systems with $A$ is expensive, then one should use the block Arnoldi algorithm that requires only mat-vec products with the matrix $A$. In that case,  the obtained blocks $V_i$'s are of dimension $n \times s$ and form an orthonormal basis of the block Krylov subspace $\mathbb{K}(A,V)=Range([V, A\,V,\ldots,A^{m-1}\,V]) $. However, the block Arnoldi process requires generally more execution times to get good approximate solutions as compared to EBA.\\

\noindent In what follows, we will use the extended block Arnoldi algorithm, but all the results are valid when using the block Arnoldi process. To get low rank approximate solutions to \eqref{ric1}, we first  apply  the Extended Block Arnoldi (EBA) algorithm (or the block Arnoldi algorithm) to the pairs $(A,F)$ and $(D,G)$ to generate two orthonormal  bases $\{V_1,\ldots,V_m\}$ and $\{W_1,\ldots,W_m\} $ of the Extended Krylov subspaces ${\cal K}_m(A,F)$  and ${\cal K}_m(D,G)$, respectively. We obtain two orthonormal  matrices  ${\mathcal V}_m=[V_1,\ldots,V_m]$ and ${\mathcal W}_m=[W_1,\ldots,W_m]$ and two block Hessenberg matrices ${\bar {\mathcal T}_m^A}  = {\cal V}_m^T\,A\,{\cal
V}_m $ and ${\bar {\mathcal T}_m^D}= {\cal W}_m^T\,D\,{\cal
W}_m$. \\

\noindent Let $X_m(t)$ be the proposed approximate solution to (\ref{ric1}) given in the low-rank form 
\begin{equation}\label{approx1}
X_m(t) = {\mathcal V}_m Y_m(t) {\mathcal W}_m^T,
\end{equation}
satisfying the Galerkin orthogonality condition
\begin{equation}
\label{galerkin}
{\mathcal V}_m^T R_m(t) {\mathcal W}_m =0,
\end{equation}
where $R_m(t)$ is the residual $ R_m(t) = \displaystyle {\dot X_m}(t)+A\,X_m(t)+X_m(t)\,D-X_m(t)\,S\,X_m(t)- FG^T $ associated to the approximation $X_m(t)$.  Then, from \eqref{approx1} and \eqref{galerkin}, we obtain the low dimensional differential Riccati equation
\begin{equation}\label{lowric}
\left\{\begin{aligned}
\displaystyle {\dot Y}_m(t) &= - {\mathcal T}_m^A\,Y_m(t)-Y_m(t)\,{\mathcal T}_m^D + Y_m(t)\,S_m\,Y_m(t)\, +F_mG_m^T,\\
Y_m(0) & =  Y_0={\cal V}_m^T X_0 {\cal W}_m.
\end{aligned}
\right.
% {\mathcal E}_1\,\Lambda_{1,1}\,\Lambda_{1,1}^T\,{\mathcal E}_1^T=0,
\end{equation}
with $ { S}_m = {\mathcal W}_m^T\,S\,{\mathcal V}_m  $, $  F_m = {\mathcal V}_m^T\,F$ and   $  G_m = {\mathcal W}_m^T\,G$. As $X_0=Z_{0,1} Z_{0,2}^T$, the initial guess $Y_0$ can be ewpressed as $Y_0={\widetilde Y}_{0,1} {{\widetilde Y}_{0,2}}^T$ where ${\widetilde Y}_{0,1} ={{\cal V}_m}^T  Z_{0,1}$ and ${\widetilde Y}_{0,2} ={\cal W}_m^T  Z_{0,2}$.\\

\noindent Therefore, the obtained low dimensional nonsymmetric differential Riccati equation \eqref{lowric} will be solved by some classical integration method that we will see in subsections 3.2 -- 3.4.\\ 
%**%    $3.2--3.4$ corrige en 3.2--3.4
\noindent In order to stop the EBA iterations, it is desirable to be able to test if $ \parallel R_m
\parallel < \epsilon $, where $\epsilon$ is some chosen tolerance, without having to compute extra matrix  products
involving the matrices $ A $ and $D$ and their inverses. The next result gives an expression of the residual norm of $ R_m(t) $ which does not require the explicit calculation of the approximate $
X_m(t) $. A factored form will be computed only when the desired accuracy is achieved. 
\begin{theorem} \label{th2}
Let $ X_m(t) = {\mathcal V}_mY_m(t){\mathcal W}_m^T $ be the approximation obtained at step $ m $ by the Extended Block Arnoldi  method where $ Y_m $ solves  the low-dimensional differential Riccati equation (\ref{lowric}).Then 
%
%\begin{equation} \label{result1}
%R_m = F_m^T\,X_m+X_m\,F_m
%\end{equation}
%
\begin{equation}
\label{result2}
\parallel R_m(t) \parallel = \displaystyle \max \{\parallel T_{m+1,m}^AE_m^T Y_m(t) \parallel, \,  \parallel Y_m(t)E_m T_{m+1,m}^D \parallel \}
\end{equation}
where $  Y_m $ is  solution of \eqref{lowric}.
\end{theorem}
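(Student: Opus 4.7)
The plan is to substitute the factored ansatz $X_m(t)=\V_m Y_m(t)\W_m^T$ directly into the definition of the residual $R_m(t)$ and to replace $\dot Y_m(t)$ by the right-hand side of the projected Riccati equation \eqref{lowric}. Because the first blocks of the two EBA bases are obtained by orthonormalizing $F$ and $G$, one has $F=\V_m F_m$ and $G=\W_m G_m$, so the constant terms $\V_m F_m G_m^T\W_m^T$ and $FG^T$ cancel exactly, and similarly the two quadratic contributions $\V_m Y_m(t)S_m Y_m(t)\W_m^T$ cancel. What remains is the linear part
\[
R_m(t)=(A\V_m-\V_m\T_m^A)\,Y_m(t)\W_m^T+\V_m\, Y_m(t)\,(\W_m^T D-\T_m^D\W_m^T).
\]

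Next I apply the EBA identity \eqref{rela1} for the pair $(A,F)$, together with the analogous relation (in transposed form, to accommodate the right action of $D$ on $\W_m^T$) associated with $D$, namely
\[
A\V_m-\V_m\T_m^A=V_{m+1}T_{m+1,m}^A E_m^T,\qquad \W_m^T D-\T_m^D\W_m^T=E_m T_{m+1,m}^D W_{m+1}^T.
\]
Substituting back yields the two-term decomposition
\[
R_m(t)=V_{m+1}\,T_{m+1,m}^A E_m^T Y_m(t)\,\W_m^T+\V_m\, Y_m(t)\,E_m T_{m+1,m}^D\, W_{m+1}^T,
\]
in which the two summands have orthogonal column ranges (since $V_{m+1}^T\V_m=0$) and orthogonal row ranges (since $\W_m^T W_{m+1}=0$), while all four outer factors have orthonormal columns.

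To finish, I compute $R_m(t)R_m(t)^T$: the two cross products vanish thanks to $\W_m^T W_{m+1}=0$, leaving
\[
R_m(t)R_m(t)^T=V_{m+1}M_1 V_{m+1}^T+\V_m M_2 \V_m^T,
\]
with $M_1=T_{m+1,m}^A E_m^T Y_m(t)Y_m(t)^T E_m (T_{m+1,m}^A)^T$ and $M_2=Y_m(t)E_m T_{m+1,m}^D (T_{m+1,m}^D)^T E_m^T Y_m(t)^T$. Since $V_{m+1}^T\V_m=0$, the nonzero part of the spectrum of $R_m R_m^T$ is the disjoint union of the spectra of $M_1$ and $M_2$, so $\|R_m(t)\|_2^2=\max\{\lambda_{\max}(M_1),\lambda_{\max}(M_2)\}$, which is exactly the claim after taking square roots. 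The one point that needs care is securing the transposed Arnoldi identity on the $D$-side so that right-multiplication by $D$ behaves correctly; once that is in place, the essential ingredient is the standard observation that the $2$-norm of a sum of two matrices with mutually orthogonal column \emph{and} row ranges equals the maximum of their individual $2$-norms.
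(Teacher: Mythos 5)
Your argument is correct and is essentially the paper's proof spelled out: both reduce the residual, after cancellation of the constant and quadratic terms, to $R_m(t)=V_{m+1}T^A_{m+1,m}E_m^TY_m(t)\,{\cal W}_m^T+{\cal V}_m\,Y_m(t)E_mT^D_{m+1,m}W_{m+1}^T$ (the paper writes this same decomposition as ${\cal V}_{m+1}$ times a block anti-diagonal matrix times ${\cal W}_{m+1}^T$) and then use orthonormality of the outer factors to conclude that the norm is the maximum of the two block norms. Your explicit computation of $R_m(t)R_m(t)^T$ and your remark about needing the transposed Arnoldi relation on the $D$-side (which the paper uses tacitly, writing the pair as $(D,G)$ here but as $({\mathcal D}_k^T,\cdot)$ in Section 4) simply make precise what the paper's one-line proof leaves implicit.
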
 
\begin{proof}
Using the fact that $Y_m$ is a solution of the low order Riccati equation (\ref{lowric}), we  get
\begin{equation}\label{res1}
R_m(t) = {\cal V}_{m+1}
 \left (
 \begin{array}{cc}
 0&Y_m(t) E_m {\cal T}_{m+1,m}^D\\
 {\cal T}_{m+1,m}^A E_m^TY_m(t) & 0
 \end{array}\right)\,{\cal W}_{m+1}^T .
 \end{equation}
 Then since  ${\cal V}_{m+1}$ and ${\cal W}_{m+1}$ are orthonormal matrices, the result follows.
\end{proof} 

%\noindent The result of Theorem \ref{th2} is important in practice, it allows us to stop the iteration when convergence is achieved without computing the approximate solution $X_m(t)$ at each iteration. 

\noindent Let us see now how the obtained approximation could be expressed in a factored form. As for the algebraic case \cite{ghachedjbilou,heyouni}, using the singular value decomposition of $Y_m(t)$, and neglecting the singular values that are close to zero, the approximate solution $X_m(t)={\cal V}_m Y_m(t) {\cal W}_m^T$ can be given in the following factored form
	\begin{equation*}
	\label{appXm}
	X_m(t)\approx Z_{m,1}(t)\, Z_{m,2}^T(t),
	\end{equation*} 
	where  $Z_{m,1}(t)$ and $Z_{m,2}(t)$ are small rank matrices. \\

\noindent The following result shows  that the approximation $X_m $ is an exact solution of a perturbed  differential  Riccati equation and that the error $\mathcal{E}_m(t)=X(t)-X_m(t)$ solves another nonsymmetric differential Riccati equation.

\begin{theorem}\label{th}
Let $X_m$ be the approximate solution given by \eqref{approx1}. Then we have 
%\begin{equation}\label{error}
%\begin{equation}\label{er0}
\begin{eqnarray*}
 \displaystyle {\dot X}_m(t) &=& -(A-\Delta_m^A)\,X_m(t)-X_m(t)\,(D-\Delta_m^D)+X_m(t) S\,X_m(t)+FG^T, \label{er1}\\
R_m(t) & = & \Delta_m^A X_m+X_m \Delta_m^D, and\\
{\dot {\mathcal E}_m}(t)& =& -(A-X_mS)\mathcal{E}_m(t) - \mathcal{E}_m(t)(D-SX_m) + \mathcal{E}_m(t) S\mathcal{E}_m(t) -\Delta_m^A X_m-X_m \Delta_m^D.
\end{eqnarray*}

\noindent where $ \Delta_m^A=V_{m+1}T_{m+1,m}^A V_m^T $,  $ \Delta_m^D=W_mT_{m+1,m}^D W_m^T$,  $\mathcal{E}_m(t)=X(t)-X_m(t)$ and $X$ is an exact solution of \eqref{ric1}.
% and $ \parallel F_m \parallel = \parallel T_{m+1,m} \parallel $.
\end{theorem}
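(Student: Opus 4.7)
The plan is to derive the three identities in sequence by lifting the projected differential equation \eqref{lowric} back to the ambient space and applying the Arnoldi-type relations so as to recognise the full-space operators $A$ and $D$. Since $\mathcal{V}_m$ and $\mathcal{W}_m$ are time-independent, differentiating \eqref{approx1} gives $\dot X_m = \mathcal{V}_m \dot Y_m \mathcal{W}_m^T$; substituting \eqref{lowric} produces four terms to be massaged back into ambient quantities. For the $A$-term I would use \eqref{rela1} together with $V_m^T \mathcal{V}_m = E_m^T$ to obtain $\mathcal{V}_m \mathcal{T}_m^A Y_m \mathcal{W}_m^T = A X_m - \Delta_m^A X_m$. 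The analogous right-hand Arnoldi identity for $D$ acting on $\mathcal{W}_m$ yields $\mathcal{V}_m Y_m \mathcal{T}_m^D \mathcal{W}_m^T = X_m D - X_m \Delta_m^D$. The quadratic term collapses through $S_m = \mathcal{W}_m^T S \mathcal{V}_m$ to $X_m S X_m$, while the constant term simplifies to $FG^T$ because $F$ and $G$ seed the Arnoldi processes and hence lie in the ranges of $\mathcal{V}_m$ and $\mathcal{W}_m$, so $\mathcal{V}_m \mathcal{V}_m^T F = F$ and $\mathcal{W}_m \mathcal{W}_m^T G = G$. Rearranging delivers the first identity, and substituting it into the definition of $R_m$ immediately yields the second.

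For the error equation I would subtract the just-derived equation for $\dot X_m$ from the NDRE \eqref{ric1} satisfied by $X$, setting $\mathcal{E}_m = X - X_m$. The linear pieces collect to $-A \mathcal{E}_m - \mathcal{E}_m D - \Delta_m^A X_m - X_m \Delta_m^D$. The quadratic difference is handled by writing $X = X_m + \mathcal{E}_m$ and expanding:
\[
XSX - X_m S X_m = X_m S \mathcal{E}_m + \mathcal{E}_m S X_m + \mathcal{E}_m S \mathcal{E}_m.
\]
Folding the two cross terms into the linear part converts $-A\mathcal{E}_m + X_m S \mathcal{E}_m$ and $-\mathcal{E}_m D + \mathcal{E}_m S X_m$ into $-(A - X_m S)\mathcal{E}_m$ and $-\mathcal{E}_m(D - S X_m)$, producing the stated third identity.

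The main obstacle is pinning down the right-hand Arnoldi identity for $D$, since $\mathcal{W}_m$ multiplies $X_m$ from the right: the convention used for the Arnoldi process on the pair $(D,G)$, and the precise placement of $W_{m+1}$ in $\Delta_m^D$, both require care. In fact, for the matrix calculus to balance one naturally expects $\Delta_m^D = W_m T_{m+1,m}^D W_{m+1}^T$ (matching the expression \eqref{res1} for $R_m$), so the stated formula appears to contain a typographical slip that should be corrected before the bookkeeping proceeds. Once that identity is settled, the remainder of the argument is routine algebra analogous to the symmetric case treated in \cite{ghachedjbilou,heyouni}.
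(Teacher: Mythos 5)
Your proof is correct and follows exactly the route the paper intends (the paper's own proof is only the one-line remark that the result follows from \eqref{rela1}, the residual expression and the initial equation \eqref{ric1}): lifting \eqref{lowric} back through the Arnoldi relations, using $V_m^T{\cal V}_m=E_m^T$, ${\cal V}_m{\cal V}_m^TF=F$, ${\cal W}_m{\cal W}_m^TG=G$, and expanding $XSX-X_mSX_m$ with $X=X_m+\mathcal{E}_m$ is precisely the required computation. Your observation that consistency with \eqref{res1} forces $\Delta_m^D=W_mT_{m+1,m}^DW_{m+1}^T$ rather than the stated $W_mT_{m+1,m}^DW_m^T$ is also correct --- that is a typographical slip in the theorem statement, not a gap in your argument.
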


\medskip
\begin{proof}
\noindent  The proof can be easily obtained  from the relation \eqref{rela1} and the  expressions of the residual $R_m(t)$ and the initial equation \eqref{ric1}. 
\end{proof}

\noindent Remark that $\Vert \Delta_m^A \Vert = \Vert T_{m+1,m}^A  \Vert $ and $\Vert \Delta_m^D \Vert = \Vert T_{m+1,m}^D  \Vert $ which shows that these two quantities tend to 0 as $m$ increases since $\Vert T_{m+1,m}  \Vert $  goes to zero as $m$ increases.\\

%\noindent Notice that as the error $\mathcal{E}_m $ satisfies the nonsymmetric differential Riccati given in Theorem \ref{th}, it can be expressed as
%\begin{eqnarray}
%\label{err11}
%\mathcal{E}_m(t)  &= &e^{-t(A-X_mS)} \mathcal{E}_{m,0} e^{-t(D-SX_m)} - \int_0^t e^{-(t-\tau)A}(\Delta_m^A X_m+X_m \Delta_m^D)e^{-(t-\tau)D}\, d\tau \\
%&+ &\int_0^t e^{-(t-\tau)(A-X_mS)}\mathcal{E}_m(\tau) S \mathcal{E}_m(\tau)e^{-(t-\tau)(D-SX_m)}\, d\tau .
%\end{eqnarray}

\noindent The matrix associated to the first nonsymmetric differential equation  in Theorem \ref{th} is given by
 \begin{equation}
 \label{mer}
 {\cal L}_m= \left (
  \begin{array}{cc}
  D-\Delta^D_m & -S\\
 -FG^T&  A-\Delta^A_m
  \end{array}\right),
 \end{equation}
also expressed as 
$${\cal L}_m= \left (
  \begin{array}{cc}
  D & -S\\
 -FG^T &  A
  \end{array}\right) - \left (
    \begin{array}{cc}
    \Delta^D_m& 0\\
 0&  \Delta^A_m
    \end{array}\right) ,$$
   This shows that the matrix ${\cal L}_m$ could be considered as a perturbation of the matrix ${\cal L}$ associated to the initial problem \eqref{ric1}. 
   Notice  that when $X_m(t)$ converges to $X(t)$ as $m$ increases, $R_m(t)=\Delta_m^A X_m+X_m \Delta_m^D$ goes to zero  and then $\Vert \Delta_m^A \Vert$ and $\Vert \Delta_m^D \Vert$ tend to zero which shows that the matrix ${\cal L}_m$ converges to the matrix ${\cal L}$.
% \begin{equation*}
%  {\cal K}_X= \left (
%   \begin{array}{cc}
%   D-SX & -S\\
%  0 &  A-XS
%   \end{array}\right),
%  \end{equation*}

%For  the non-symmetric differential matrix Riccati equation /NDRE/
%\bea{1} 
%\dot X(t) &=& - AX(t) - X(t)D + X(t)SX(t) +Q \\
%X(0) &=& X_0\nn
%\eea
%with solution $X(t) \in \Rs^{n\ti p}$ and data matrices   $A \in \Rnn$,  $D\in\Rs^{p\ti n}$, $Q \in \Rs^{n\ti p}$ and  $S\in \Rs^{p\ti n}$,  such that 
%\bas
%\H = \mat{rr} D & -S \\ -Q & A\rix
%\eas
%is nonsingular $M$-matrix, or an irreducible singular $M$-matrix,
\noindent Let us come back to the NDRE equation of the error ${\E}_m(t)$ from Theorem \ref{th}
\bea{E}
\dot \E_m(t) &=& -A_c \E_m(t) - \E_m(t)D_c+ \M(t,\E_m(t)),
\eea
where for some matrix $P$ the operator $\M(t,P)$ is defined by
\bea{5}
\M(t,P) := P(t) S P(t) -\De_m^AX_m-X_m\De_m^D,
\eea
and $A_c = A - X_mS$, $D_c = D - SX_m$, $\De_m^A = V_{m+1} T_{m+1,m}^A V\Tr_m$, $\De_m^D = W_m T_{m+1,m}^D W_m\Tr$.\\
% $\E_m(t) = X(t) - X_m(t)$,  $X$ is the exact solution to the NDRE (\ref{1}) and $X_m(t) $ is the approximate solution to (\ref{1}) obtained by projecting the problem(\ref{1}) onto extended Krylov  subspace.  

\noindent For the  error $\E_m$ from equation (\ref{E}), the following nonlocal  bound  is valid:

\begin{theorem}	\label{err1}
Let $\Phi_P(t,t_0)$  be the fundamental matrix for the  equation $\dot \eta(t) = P \eta(t)$ for some  real matrix $P$. 

\noindent Denote 
\bea{7}
\nu &=& \max\left\{ \int_0^t \|\Phi_{A_c}(t,\tau)\|\,\|\Phi_{D_c}(\tau,t)\| \, d\tau, t \in T \right\},\\
\kappa &=&  \max \left\{ \|\Phi_{A_c}(t,0)\|\,\|\Phi_{D_c}(0,t)\| : t\in T\right\}\label{8}, 
\eea
and 
\bea{10}
a_0& = &  \nu\|S\|; \quad  a_1 = \nu\|X_m\|(\|\De_m^A\| +\|\De_m^D\|) + \kappa \|\E_m(0)\|.
\eea
Then, for the spectral norm $\|\E_m\|$ of the error $\E_m = X - X_m $,  the nonlocal  bound 
\bea{14}
& & \|\E_m\| \leq \rho =  \frac{2a_1}{1+\sqrt{1-4a_0a_1}}
\eea
is valid whenever 
\bea{13}
 \delta := \{ \|\Delta^A_m\|, \|\Delta^D_m\|\}  \in \Om := \left\{ a_0 a_1 \leq 0.25 \right\} .
 \eea
 \end{theorem}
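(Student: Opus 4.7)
The plan is to convert the differential equation for $\E_m$ into an integral equation via the variation-of-constants formula, take norms to obtain a quadratic scalar inequality, and then deduce the bound by solving the quadratic.

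First, I would treat the error equation
$\dot \E_m(t) = -A_c \E_m(t) - \E_m(t)D_c + \M(t,\E_m(t))$
as an inhomogeneous Sylvester-type equation with time-dependent forcing term $\M(t,\E_m(t))$. Using the fundamental matrices $\Phi_{A_c}$ and $\Phi_{D_c}$, the solution admits the representation
\begin{equation*}
\E_m(t) = \Phi_{A_c}(t,0)\,\E_m(0)\,\Phi_{D_c}(0,t) + \int_0^t \Phi_{A_c}(t,\tau)\,\M(\tau,\E_m(\tau))\,\Phi_{D_c}(\tau,t)\,d\tau,
\end{equation*}
which can be verified by direct differentiation (up to a sign convention on $\Phi_{\pm A_c}$ that matches the definition used in the theorem). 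Substituting the explicit form of $\M$ from \eqref{5} splits the integrand into a quadratic piece $\E_m(\tau)S\E_m(\tau)$ and a driving piece $-\Delta_m^A X_m - X_m\Delta_m^D$.

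Next, I would take spectral norms and use submultiplicativity together with the definitions \eqref{7}, \eqref{8} and \eqref{10}. The homogeneous part contributes at most $\kappa\,\|\E_m(0)\|$; the constant forcing inside the integral contributes at most $\nu\,\|X_m\|(\|\Delta_m^A\|+\|\Delta_m^D\|)$; and the quadratic term yields $\nu\,\|S\|\,\|\E_m\|_\infty^2$, where $\|\E_m\|_\infty = \sup_{t\in T}\|\E_m(t)\|$. Taking the supremum over $t\in T$ on the left-hand side produces the scalar inequality
\begin{equation*}
\|\E_m\|_\infty \;\leq\; a_0\,\|\E_m\|_\infty^{\,2} + a_1.
\end{equation*}

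Finally, I would analyze the quadratic $a_0 x^2 - x + a_1 \geq 0$. Under the hypothesis \eqref{13}, namely $a_0 a_1 \leq 1/4$, the discriminant $1-4a_0a_1$ is nonnegative, so this quadratic has two real roots
\begin{equation*}
\rho_- \;=\; \frac{1-\sqrt{1-4a_0a_1}}{2a_0} \;=\; \frac{2a_1}{1+\sqrt{1-4a_0a_1}}, \qquad \rho_+ \;=\; \frac{1+\sqrt{1-4a_0a_1}}{2a_0},
\end{equation*}
with $\rho_-\leq\rho_+$, and the inequality forces $\|\E_m\|_\infty\in[0,\rho_-]\cup[\rho_+,\infty)$. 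To select the smaller root I would use a continuity/bootstrap argument: $\|\E_m(t)\|$ depends continuously on $t$, and at $t=0$ the bound reduces to $\|\E_m(0)\|\leq a_1\leq \rho_-$ (since $\kappa\|\E_m(0)\|\leq a_1$ and $a_1 \leq \rho_-$ whenever $a_0 a_1\leq 1/4$). Therefore the trajectory $\|\E_m(t)\|$ cannot cross the forbidden interval $(\rho_-,\rho_+)$, giving the claimed bound $\|\E_m\|\leq\rho=\rho_-$.

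The main obstacle I anticipate is the continuity argument needed to exclude the upper branch $[\rho_+,\infty)$: this requires care in justifying that the inequality $\|\E_m\|_\infty\leq a_0\|\E_m\|_\infty^2+a_1$ holds on every subinterval $[0,t]$, not just on the entire interval $T$, so that the bootstrap from $t=0$ goes through. A minor technical point is verifying the variation-of-constants representation with the specific sign/time-ordering convention implicit in the notation $\Phi_{A_c}(t,\tau)$ and $\Phi_{D_c}(\tau,t)$ used in the definitions of $\nu$ and $\kappa$.
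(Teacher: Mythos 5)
Your proposal is correct, and it matches the paper up to the very last step: the paper also writes the error equation in the integral (variation-of-constants) form, bounds the homogeneous part by $\kappa\|\E_m(0)\|$, the forcing by $\nu\|X_m\|(\|\De_m^A\|+\|\De_m^D\|)$, and the quadratic term by $\nu\|S\|\,\|\E_m\|^2$, arriving at exactly the majorant $h(r)=a_1+a_0r^2$. Where you diverge is in how the smaller root is selected. The paper uses the Lyapunov-majorant/contraction technique: it shows that the integral operator $\Pi$ satisfies $\|\Pi(P)-\Pi(Y)\|\le h'(r)\|P-Y\|$, and that whenever $h(\rho)\le\rho$ and $h'(\rho)<1$ (which hold at $\rho=\rho_-$ when $a_0a_1\le 1/4$) the operator maps the ball $M_\rho$ of continuous functions of norm at most $\rho$ into itself and is a contraction there; the fixed point is the error, so $\|\E_m\|\le\rho$. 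You instead derive the a priori scalar inequality $\|\E_m\|_\infty\le a_0\|\E_m\|_\infty^2+a_1$ and exclude the upper branch $[\rho_+,\infty)$ by a continuity/bootstrap argument starting from $\|\E_m(0)\|\le a_1\le\rho_-$ (which uses $\kappa\ge 1$, valid since $\Phi(0,0)=I$). Both closings are sound. The fixed-point route buys existence and uniqueness of the solution of the operator equation inside $M_\rho$ for free and avoids the branch-selection issue entirely; your route is more elementary but presupposes that the error is a continuous function defined on all of $T$, and, as you correctly flag, it requires running the estimate on every subinterval $[0,t]$ (with the running maximum $e(t)=\max_{s\le t}\|\E_m(s)\|$, which is continuous and nondecreasing and satisfies $e(t)\le a_1+a_0e(t)^2$) so that the intermediate value theorem can rule out a jump across the forbidden interval $(\rho_-,\rho_+)$. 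With that refinement spelled out, your argument is complete.
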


\medskip
\begin{proof}
\noindent   Define the operator $\L(P)$ 
\bea{L}
\L(P):= \int_0^t \Phi_{A_c}(t)\Phi_{A_c}^{-1}(\tau) \, P\, \Phi_{D_c}^{-1}(\tau) \Phi_{D_c}(t)d\tau
\eea
with matrix
\bas
Mat (\L) := L: = \int_0^t\left[\Phi_{D_c}\Inv(\tau)\Phi_{D_c}(t)\right]\Tr\oti \left[\Phi_{A_c}(t) \Phi_{A_c}\Inv(\tau)\right] d\tau,
\eas
and rewrite expression (\ref{E})  in  operator form
\bea{6}
\dot{\E}_m(t) &=& \Pi(\E_m) (t), 
\eea
with
\bea{pi}
\Pi(\E_m) (t) &:=& \Phi_{A_c}(t,0) \E_m(0) \Phi_{D_c}(t,0)\
- \int_0^t \Phi_{A_c}(t,\tau ) \M(\tau, \E_m(\tau)) \Phi_{D_c}(\tau,t) d\tau\nn \\
&=& \Phi_{A_c}(t,0) \E_m(0) \Phi_{D_c}(t,0)\ + \L(-\De_m^AX_m - X_m \De_m^D) + \L(\E_m S \E_m).
\eea
Using (\ref{5}) we get
\bas
\|\M(t,P)\| &\leq & \|P\|^2\|S\| + \|X_m\|(\|\De_m^A\|+\|\De_m^D\|).
\eas

\noindent The  Lyapunov majorant for the operator $\Pi(.)$ (\ref{pi}) such that 
$ \|\Pi(\E_m) (t)\| < h(\|\E_m\|\|)$ is
\bea{9}
\|\Pi(\E_m)(t)\| \leq h(\|\E_m\|\|) := a_1+ a_0 \|\E_m\|^2,
\eea
with $a_0$, $a_1$ given in (\ref{10}).

\noindent In similar way for some $P$ and $Y$ we get
\bea{11} 
\|\Pi(P)(t) - \Pi(Y)(t)\| \leq h'(r)\|P-Y\| =2a_0r\|P-Y\|,
\eea
where $r=\max\{\|P\|,\|Y\|\}$.

\noindent Assume that there exists a number $\rho>0$, such that 
\bea{12}
h(\rho) \leq \rho,\mbox{ and } h'(\rho) < 1.
\eea
Denote by $M_{\rho} $ the set of continuous matrix valued functions $P:T\longrightarrow \Rs^{n\ti p}$ and $\|P\| \leq \rho$. Then from (\ref{9}) - (\ref{12}) it follows, that the operator $\Pi(.)$ is a contraction on $M_{\rho}$ and maps this set into itself. Hence there is a  solution $\E_m (t)$ of the operator equation (\ref{6}) such that for 
\bas
& &  \delta := \{ \|\Delta^A_m\|, \|\Delta^D_m\|\}  \in \Om := \left\{ a_0 a_1 \leq 0.25 \right\} \\
& & \|\E_m\| \leq \rho :=  \frac{2a_1}{1+\sqrt{1-4a_0a_1}}.
\eas
In what follows, the theorem is proven. \end{proof}

\noindent Using the property of the logarithmic norm, the estimates (\ref{7}), (\ref{8}) of the numbers $\nu$ and $\kappa$ take the form
\bea{16}
\|\Phi_{A_c}(\tau,0)\| &\leq & \exp\left[\int_0^\tau \la (A_c(r)) d r\right]\leq \exp \left[\int_0^\tau \la_+ (A_c(r))dr \right]\\
\|\Phi_{D_c}(\tau,0)\| &\leq & \exp\left[\int_0^\tau \xi(D_c(r))dr \right] \leq \exp\left[\int_0^\tau \xi_+(D_c(r))dr\right]\label{17},
\eea
where
\bas
\la(t) &=& 0,5 \la_{\max}\left[A_c(t)+A_c(t)\Tr\right],\\
\xi(t) &=& 0,5 \xi_{\max}\left[D_c(t) + D_c(t)\Tr\right],
\eas
%calculated to the spectral norm, 
are the logarithmic norms of the matrices $A_c=A-X_m S$ and $D_c=D-SX_m $, respectively. And
\bas
\nu \leq \nu_1 \leq \nu_2\\
\kappa \leq \kappa_1\leq \kappa_2
\eas
with 
\bas
\nu_1 &=& \max  \left\{\int_0^t \exp \left[\int_0^r (\la(\tau)+\xi(\tau))d \tau\right] d r : t \in T\right\}\\
\nu_2 &=& \int_0^t \exp \left[ \int_0^r(\la_+(\tau)+\xi_+(\tau))d \tau\right] dr,\\
\kappa_1 &=& \exp\left[\max \left\{\int_0^t \left(\la(\tau)+\xi(\tau)\right) d\tau : t \in T\right\}\right].\\
\kappa_2 &=& \exp\left[\int_0^t\left(\la_+(\tau) + \xi_+(\tau) \right)d\tau\right],\\
\la_+(t) &=& \left\{\begin{tabular}{rl}
	$\la(t), $ & $\la(t) > 0$\\
	0, & $\la(t) \leq 0$\\
\end{tabular}\right. \quad \quad \xi_+(t) \;\; = \;\; \left\{\begin{tabular}{rl}
	$\xi(t), $&  $\xi(t) > 0$\\
	0, & $\xi(t) \leq 0$\\
\end{tabular}\right. .
\eas

\noindent In order to obtain an explicit bound for the norm of the fundamental matrix $\|\Phi_P(t)\|$   for $P(t)= A_c(t) $ or $D_c(t) $ we can use  also the  known bounds for the matrix exponential $e^{P(t)}$ based on  power series, logarithmic norm and matrix decomposition.  Some  bounds for the matrix exponential $e^{P(t)}$  are  summarized  in \cite{PetCK91}:
\bea{ex}
\|e^{P(t)} \| \leq g(t) = c_0 e^{\varrho t} \sum_{k=0}^{p-1}(\varpi t)^k/k!,
\eea
with constants $c_0$, $\varrho$,  $\varpi$ and $p$,  listed in Table \ref{tabe}.
 \begin{table}[h!!]
  \caption{Bounds for the matrix exponential $e^{P(t)}$}\label{tabe}
 \begin{center}
	\begin{tabular}{c|c|c|c|c|c}
		\hline
		& $\mbox{Power series}$ & $ \mbox{Log norm} $ & $
		\mbox{Jordan (1)} $ & $ \mbox{Jordan (2)} $ & $ \mbox{Schur} $ \\
		\hline \hline
		$ c_0 $ & 1 & 1 & $ \mbox{cond}(Y) $ & $ \mbox{cond}(Y) $ & $ 1 $\\
		$ \varrho $ & $ \|P(t)\| $ & $ \mu(P(t)) $ & $ \alpha(P(t)) $ & $
		\alpha(P(t)) + d_{\varsigma} $ & $ \alpha(P(t)) $ \\
		$ \varpi $ & 0 & 0 & 1 & 0 & $ \varpi $ \\
		$ p $ & - & - & $ m $ & - & $ l $\\
		\hline
	\end{tabular}
\end{center}
\end{table}

\medskip
\noindent Here $ \mu (P(t)) $ is the maximum eigenvalue of the matrix 
$(P(t) + P(t)\Tr)/2 $,  $ J = Y^{-1}P(t)Y $ is the Jordan canonical
form of $ P(t) $
and $ \varsigma \geq 1 $ is the dimension of the maximum block in $
J $ (the matrix $ Y $ is chosen so that the condition number
$ \mbox{cond}(Y) = \|Y\|\|Y^{-1}\| $
is minimized), $d_\varsigma = \cos\left(\frac{\pi}{\varsigma +1}\right)$, $\alpha (P(t)) $ is the spectral abscissa of $P(t)$,
i.e. the maximum real part of the eigenvalues of $ P(t) $, and
$ T = U^H P(t)U = \Lambda + {\cal N} $ is the Schur
decomposition of $ P(t) $ where $ U $ is unitary, $ \Lambda $
is diagonal and $ {\cal N} $ is strictly upper triangular
matrix (the matrix $  U $ is chosen so that the
norm of the matrix $ {\cal N } $ is minimized),
$ l = \min\{\f: {\cal N}^\f = 0\} $ is the index of
nilpotency of $ {\cal N} $, and $\varpi = \|{\cal N }\| $ .

\subsection{Solving the projected problem using the exponential-matrix  of the low  dimensional problem}
Let us see now how to solve the projected low-dimensional nonsymmetric differential Riccati equation \eqref{lowric} which  is related to the initial value problem
\begin{equation}\label{vp1}
\left (
\begin{array}{ll}
\dot Y_{1,m}(t) \\
\dot Y_{2,m}(t) 
\end{array} 
\right ) =
\left (
\begin{array}{ll}
 {\mathcal T}_m^D & -S_m \\
F_mG_m^T & - {\mathcal T}_m^A
\end{array} 
\right ) \; 
\left (
\begin{array}{ll}
Y_{1,m}(t) \\
Y_{2,m}(t) 
\end{array} 
\right ) ,\; Y_{1,m}(0)=I\;{\rm  and} \; Y_{2,m}(0)=Y_0. 
\end{equation}

\noindent Notice that if we set
\begin{equation}\label{um}
{\cal H}_m = \left (
\begin{array}{ll}
 {\mathcal T}_m^D & -S_m \\
F_mG_m^T & - {\mathcal T}_m^A
\end{array} 
\right ),\; {\cal H} = \left (
\begin{array}{ll}
 D & -S \\
FG^T & -A
\end{array} 
\right )\;  {\rm and}\; {\cal U}_m= \left (
\begin{array}{ll}
 {\mathcal W}_m & 0 \\
0&  {\mathcal V}_m
\end{array} 
\right ),
\end{equation}
we get the following relation
\begin{equation*}
{\cal H}_m  ={\cal U}_m^T \, {\cal H} \, {\cal U}_m \;\; {\rm with}\; \;{\cal U}_m^T {\cal U}_m=I.
\end{equation*}
The solution of  the projected linear differential system \eqref{vp1} is given as
\begin{equation}\label{projs1}
\left (
\begin{array}{ll}
Y_{1,m}(t) \\
Y_{2,m}(t) 
\end{array} 
\right )
= e^{t\, {\cal H}_m }\, Z_0\,\, {\rm with}\,\,\; Z_0 = \left (
\begin{array}{cc}
I\\
Y_{0}
\end{array} 
\right ).
\end{equation}

\noindent As in general $m$ is small, the solution given by \eqref{projs1} can be obtained from Pad\'e approximants implemented in Matlab as {\tt expm}. The solution $Y_m$ of the projected nonsymmetric differential Riccati equation \eqref{vp1} is then given as 
\begin{equation}
\label{solproj}
Y_m(t)=Y_{1,m}(t) \, Y^{-1}_{2,m}(t) ,
\end{equation}
provided that $Y_{2,m}(t)$ is nonsingular and then the approximate solution to the initial problem \eqref{ric1} is defined by  $X_m={\cal V}_m Y_m {\cal W}^T_m$.\\

\noindent Another way of getting approximate solutions, is to use directly an approximation of $e^{t{\cal H} } Z_0$ as it appears in \eqref{vp2}. Using the matrices  ${\cal U}_m$  and ${\cal H}_m$ given in \eqref{um}, we propose the following approximation
\begin{equation}
\label{exp1}
e^{t {\cal H}} Z_0  \approx {\cal U}_m \, e^{t {\cal H}_m}\, \Gamma_m, \;\; {\rm with}\;\; \Gamma_m={\cal U} ^T_m Z_0.
\end{equation}
Therefore, setting 
$$\left (
\begin{array}{ll}
X_{1,m}(t) \\
X_{2,m}(t) 
\end{array} 
\right ) = {\cal U}_m\, e^{t {\cal H}_m}\, \Gamma_m,,$$
the approximate solution of the solution $X$ of \eqref{ric1} is given as 
$${\widetilde X}_m = X_{1,m}(t) {X^{-1}_{2,m}(t)}.$$

Instead of solving the low dimensional nonsymmetric differential Riccati equation \eqref{lowric} by using the exponential scheme \eqref{projs1}, we can use an integration scheme for solving ordinary differential equations such as Rosenbrock \cite{rosenbrock} or Backward Differentiation Formula (BDF)  methods \cite{asher,dieci}. That is the subject of the following two subsections.\\

 \subsection{Using the BDF integration scheme}

 At each time-step $t_k$, the approximate $Y_{m,k}$ of the $Y_m(t_k)$, where $Y_m$ is the solution to (\ref{lowric})  is then computed solving a nonsymmetric algebraic Riccati equation (NARE).  We consider the problem \eqref{lowric} and apply the $s$-step BDF method. At each iteration $k+1$ of the BDF method, the approximation $Y_{m,k+1}$ of  $Y_m(t_{k+1})$ is given by the implicit relation 
 \begin{equation}
 \label{bdf}
 Y_{m,k+1} = \displaystyle \sum_{i=0}^{s-1} \alpha_i Y_{m,k-i} +h \beta {\mathcal F}_m(Y_{m,k+1}),
 \end{equation} 
 where $h=t_{k+1}-t_k$ is the step size, $\alpha_i$ and $\beta$ are the coefficients of the BDF method as listed  in Table \ref{tab0} and ${\mathcal F}_m(X)$ is  given by 
 $${\mathcal F}_m(Y)= -{\cal T}_m^A\,Y-Y{\cal T}_m^D+Y\,S_m\,Y+F_mG_m^T.$$
 
 \begin{table}[h!!]
  \caption{Coefficients of the $s$-step BDF method with $q \le 3$.}\label{tab0}
 \begin{center}
 \begin{tabular}{c|cccc} 
 \hline
 $s$ & $\beta$ &$\alpha_0$ & $\alpha_1$ & $\alpha_2$ \\
 \hline
 1 & 1 & 1 & &\\
 2 & 2/3 & 4/3& -1/3 &\\
 3 & 6/11 & 18/11 & -9/11 & 2/11\\
 \hline
 \end{tabular}

 \end{center}
 \end{table}
 \noindent The approximate $X_{k+1}$ solves the following matrix equation
 \begin{equation*}
 -Y_{m,k+1} +h\beta (F_mG_m^T -{\cal T}_m^A Y_{m,k+1} - Y_{k+1}  {\cal T}_m^D+  Y_{m,k+1} S_m Y_{m,k+1}) + \displaystyle \sum_{i=0}^{p-1} \alpha_i Y_{m,k-i} = 0,
 \end{equation*}
 which can be written as the following  continuous-time nonsymmetric algebraic Riccati equation
 
 \begin{equation}
 \label{ricbdf}
 \mathcal{A}_m\, Y_{m,k+1}  + \,Y_{m,k+1}\, \mathcal{D}_m  -Y_{m,k+1}\, \mathcal{S}_m\,Y_{m,k+1} - \mathcal{L}_{k+1} \mathcal{G}_{k+1}^T=0,
 \end{equation}
 Where, assuming that at each timestep, $Y_{m,k}$ can be approximated as a product of  low rank factors  $Y_{m,k}\approx Z_{m,k} {\widetilde Z}_{m,k}^T$ . The coefficients matrices are given by: 
 \begin{equation*}
 \mathcal{A}_m= \frac{1}{2}I+h\beta {\cal T}_m^A,~~ \mathcal{D}_m=\frac{1}{2}I+ h\beta {\cal T}_m^D, \, \mathcal{S}_m=h\beta S_m,
 \end{equation*}
  \begin{equation*}
 \mathcal{L}_{k+1,m}=[h\beta\, F_m, \,\alpha_0 \,Z_{m,k}, \, \alpha_1\, Z_{m,k-1},\ldots,\,\ \alpha_{q-1} \, Z_{m,k-p+1}],
  \end{equation*}
  and
  \begin{equation*}
  \mathcal{G}_{k+1,m}=[G_m, \,{\widetilde Z}_{m,k} \, {\widetilde Z}_{m,k-1},\, \ldots,{\widetilde Z}_{m,k-p+1}].
  \end{equation*}
% These Riccati equations could  be solved applying direct methods based on the Schur decomposition, or  based on generalized eigenvalues of the Hamiltonian in the small dimensional cases. 
We assume that at each step $k+1$, equation \eqref{ricbdf} has a solution. 
%In our computations, we used the matlab function {\tt care} which is quite good for small to medium problems and we considered only  the cases $q=1$ and $q=2$.  \\ 

 \subsection{Solving the low dimensional problem with the Rosenbrock method}
 Applying the two-stage Rosenbrock method \cite{butcher,rosenbrock}  to the low dimensional nonsymmetric differential Riccati  equation \eqref{lowric},  the new approximation $Y_{m,k+1}$ of  $Y_m(t_{k+1})$ obtained at step $k+1$  is defined by the relations, (see \cite{benner} for more details)
 \begin{equation}\label{ros1}
 Y_{m,k+1} =Y_{m,k}+ \displaystyle \frac{3}{2}H_1+ \frac{1}{2}H_2,
 \end{equation}
 where $H_1$ and $H_2$ solve the following Sylvester equations
 \begin{equation}\label{ros2}
 \widetilde {\mathbb{T}}^A_{m}H_1+H_1\widetilde {\mathbb{T}}^D_{m}= -\mathcal{F}(Y_{m,k}),
 \end{equation}
 \begin{equation}\label{ros3}
 \widetilde {\mathbb{T}}^A_{m}H_2+H_2\widetilde {\mathbb{T}}^D_{m}= -\mathcal{F}(Y_{m,k}+H_1)+ \displaystyle \frac{2}{h}H_1,
 \end{equation}
 where
 $$\widetilde {\mathbb{T}}^A_{m}= \gamma {\mathcal{T}}^D_{m}-\displaystyle \frac{1}{2h}I\;\;{\rm and} \;\;\widetilde {\mathbb{T}}^D_{m} = \gamma  {\mathcal{T}}^D_{m}-\displaystyle \frac{1}{2h}I,$$
 and
 $${\mathcal F}(Y)= -{\cal T}_m^A\,Y-Y{\cal T}_m^D+Y\,S_m\,Y+F_mG_m^T.$$
 The Sylvester matrix equations \eqref{ros2} and \eqref{ros3}  could be solved, for small to medium problems, by direct methods such as the Bartels-Stewart  algorithm \cite{bartels}.
 
% \subsection{Approximating $e ^{t {\cal H}} Z_0$} 
% \medskip
% \noindent Another way of getting approximate solutions, is to use directly an approximation of $e^{t{\cal H} } Z_0$ as it appears in \eqref{vp2}. Using the matrices  ${\cal U}_m$  and ${\cal H}_m$ given in \eqref{um}, we propose the following approximation
% \begin{equation}
% \label{exp1}
% e^{t {\cal H}} Z_0  \approx {\cal U}_m \, e^{t {\cal H}_m}\, \Gamma_m, \;\; {\rm with}\;\; \Gamma_m={\cal U} ^T_m Z_0.
% \end{equation}
% Therefore, setting 
% $$\left (
% \begin{array}{ll}
% X_{1,m}(t) \\
% X_{2,m}(t) 
% \end{array} 
% \right ) = {\cal U}_m\, e^{t {\cal H}_m}\, \Gamma_m,,$$
% the approximate solution of the solution $X$ of \eqref{ric1} is given as 
% $${\widetilde X} = X_{1,m}(t) {X^{-1}_{2,m}(t)}.$$
% 
% 
 
  \noindent The different steps of the extended block Arnoldi algorithm for solving NDREs  are summarized in the following algorithm
 \begin{algorithm}[H]
 	\caption{[The extended block Arnoldi algorithm for NDRE's (EBA-NDRE)]}
 	\label{alg1}
 	\begin{itemize}
 		\item {\bf Inputs.}  Matrices $A$, $D$, $S$, $F$, $G$ and an integer $ m $.
 		\item {\bf Outputs} : The approximate solution in a factored form: $	X_m(t)\approx Z_{m,1}(t)\, Z_{m,2}^T(t)$.
 		\item  ~Compute the QR decompositions of $ [F,A^{-1}F] = V_1\Lambda_1 $ and $ [G,D^{-1}G] = W_1\Lambda_2 $. 
 		\item Apply the extended block Arnoldi to the pair $(A,F)$:
 		\begin{itemize}
 			\item For $ j = 1,\ldots,m $
 			\item  Set $ V_j^{(1)} $: first $ s $ columns of $ V_j $; $ V_j^{(2)} $: second $ s $ columns of $ V_j $
 			\item $ {\cal V}_j = \left [ {\cal V}_{j-1}, V_j \right ] $; $ \hat V_{j+1} = \left [ A\,V_j^{(1)},A^{-1}\,V_j^{(2)} \right ] $.
 			\item  Orthogonalize $ \hat V_{j+1} $ w.r. to $ {\cal V}_j $ to get $ V_{j+1} $, i.e.,
 			\begin{itemize}
 				\item for $ i=1,2,\ldots,j $
 				\item $ H^A_{i,j} = V_i^T\,\hat V_{j+1} $,
 				\item $ \hat V_{j+1} = \hat V_{j+1} - V_i\,H^A_{i,j} $,
 				\item endfor
 			\end{itemize}
 			\item  Compute the QR decomposition of $ \hat V_{j+1} $, i.e., $ \hat V_{j+1} = V_{j+1}\,H^A_{j+1,j} $.
 			\item endFor.
 		\end{itemize}
 		\item Apply also the extended Arnoldi process to the pair $(D,G)$ to get the blocks $W_1,\ldots,W_{m+1}$ and the upper Hessenberg matrix whose elements are $ H^D_{i,j} $.
 		\item Solve the projected NDRE \eqref{lowric} to get $Y_m(t)$ using the exponential technique, BDF or Rosenbrock method..
 		\item The approximate solution $X_m(t)$ is given by the expression \eqref{appXm}.
 	\end{itemize}
 \end{algorithm}
 
 \section{The BDF-Newton method}
 In this section, we apply directly the BDF integration scheme to the initial problem \eqref{ric1}.  Then, each time-step $t_k$, the approximate $X_{k}$ of the $X_(t_k)$,   is then computed solving a nonsymmetric algebraic Riccati equation (NARE).  Applying the $s$-step BDF method,  the approximation $X_{k+1}$ of  $X_(t_{k+1})$ is given by the implicit relation 
 \begin{equation}
 \label{bdf-N}
 X_{k+1} = \displaystyle \sum_{i=0}^{s-1} \alpha_i X_{k-i} +h \beta {\mathcal F}(X_{k+1}),
 \end{equation} 
 where $h=t_{k+1}-t_k$ is the step size, $\alpha_i$ and $\beta$ are the coefficients of the BDF method as listed  in Table \ref{tab0} and ${\mathcal F}_m(X)$ is  given by 
 $${\mathcal F}(X)= -A\,X-XD+X\,S\,X+FG^T.$$
 \noindent The approximate $X_{k+1}$ solves the following matrix equation
 \begin{equation*}
 -X_{k+1} +h\beta (FG^T -A X_{k+1} - X_{k+1} D + X_{k+1} SX_{k+1}) + \displaystyle \sum_{i=0}^{s-1} \alpha_i X_{k-i} = 0,
 \end{equation*}
 which can be written as the following  continuous-time algebraic Riccati equation
 \begin{equation}
 \label{ricbdf2}
\mathcal{G} (X_{k+1})= -\mathcal{A}\, X_{k+1}  -\,X_{k+1}\,\mathcal{D}  +X_{k+1}\, \mathcal{S} \,X_{k+1} + {{\widetilde F}_{k+1}}^T {\widetilde G}_{k+1} =0,
 \end{equation}
 Where, assuming that at each timestep, $X_k$ can be approximated as a product of  low rank factors  $X_{k}\approx Z_{k,1} {Z_{k,2}}^T$, $Z_{k,i} \in \mathbb{R}^{n \times m_k}$, with $m_k \ll n,p$. 
 %(in practice, the $n \times p$ matrices $X_k$ are never computed, as we will explain in a remark after Algorithm 2), 
 The coefficients matrices are given by
 $$\mathcal{A}= h\beta A +\displaystyle \frac{1}{2}I,~~ \mathcal{D}= h\beta D +\displaystyle \frac{1}{2}I,\, \mathcal{S}=h\beta S$$
 $$  {\widetilde G}_{k+1}=[\sqrt{h\beta} G,\; \sqrt{\alpha_0}Z_{k,1}^T,\ldots,\sqrt{\alpha_{s-1}} Z_{k+1-s,1}^T],$$
 and
 $$ 
 {{\widetilde F}_{k+1}}=[\sqrt{h\beta} F, \sqrt{\alpha_0}Z_{k,2}^T,\ldots,\sqrt{\alpha_{p-1}} Z_{k+1-s,2}^T]^T
 .$$
 For large-scale problems, a common strategy of solving  the nonsymmetric Algebraic Riccati equation \eqref{ricbdf2} consists in applying the  Newton method combined with an iterative method for the numerical solution of the large-scale Sylvester equations arising at each internal iteration of the Newton's algorithm. In that case, we define a sequence of approximations to $X_{k+1}$ as follows:
 %\begin{algorithm}[H]
 %	\caption{[]}
 %	\label{algNewton}
 \begin{itemize}
 	\item Set $X_{k+1}^0=X_k$
 	\item Build the sequence $\left(X_{k+1}^{l}\right)_{l \in \mathbb{N}}$ defined by 
 	\begin{equation}
 	\label{newt3}
 	X_{k+1}^{l+1}=X_{k+1}^{l}-D{\mathcal G}_{X_{k+1}^{l}}({\mathcal G}(X_{k+1}^{l})),
 	\end{equation}
 \end{itemize}
 %\end{algorithm}
 \noindent where the Fr\'echet derivative $D{\mathcal G}$ of ${\mathcal G}$ at $X_{k+1}^l$ is given by
 \begin{equation}
 D{\mathcal G}_{X_{k+1}^{l}}(H)=(\mathcal{A}- \, X_{k+1}^{l}\mathcal{S}) \, H\,+\,H \,(\mathcal{D}-\mathcal{S} \,X_{k+1}^{l})
 \end{equation} 
 A straightforward calculation proves that $X_{k+1}^{l+1}$ is the solution to the Sylvester  equation
 \begin{equation}
 \label{sylvnewt}
 (\mathcal{A}- \, X_{k+1}^{l}S) \, X\,+\,X \,(\mathcal{D}-\mathcal{S} \,X_{k+1}^{l})+X_{k+1}^{l}\, \mathcal{S}\,X_{k+1}^{l}+{\widetilde F}_{k+1} {\widetilde G}_{k+1}^T\,=0.
 %\,\displaystyle \sum_{i=0}^{p-1} \alpha_i X_{k-i};
 \end{equation}
% In our computations, the large Sylvester matrix equation \eqref{newt4}, was solved by using the extended block Arnoldi algorithm; see \cite{hached-jbilou} for more details.
 The main part in each  Newton iteration is to solve a large Sylvester matrix equation with a low rank right hand side.  For small to medium problems, one can use   direct methods such as the  Bartels-Stewart algorithm \cite{bartels}. For large problems, many numerical methods have been proposed; see \cite{elguen,heyouni2,jbilou1,jbilou2,simoncini}.\\
 In our computations, we used the extended block Arnoldi   algorithm for solving the large Sylvester matrix equation (\ref{sylvnewt}). The method is defined as follows: We first apply the extended block Arnoldi (or the block Arnoldi)  to the pairs $(\mathcal{A}_k,{\widetilde F}_{k+1})$ and $({\mathcal D}_k^T,{\widetilde G}_{k+1})$ where
 $${\mathcal A}_k= \mathcal{A}-X_{k+1}^{l}\, S, {\rm and}\; {\mathcal D}_k = \mathcal{D}-\mathcal{S} \,X_{k+1}^{l}$$ and obtain a low rank approximate solution to the exact solution $X_{k+1}^{l+1}$.

Since $\mathcal{A}$ and $\mathcal{D}$ are sparse, the matrices $\mathcal{A}_k$ and $\mathcal{D}_k$ are no longer sparse and then the computation of the products $\mathcal{A}_k^{-1}Y$ and $\mathcal{D}_k^{-T}Y$ becomes very expensive. A way to overcome this drawback is to use  the Sherman-Morrison-Woodbury formula given by
\begin{equation}
\label{sherman}
(L+UV^T)^{-1} \, Y= L^{-1} Y - L^{-1}U(I + V^T L^{-1}U)V^T\, L^{-1}Y,
\end{equation}
where  $L$, $U$ and $V$ are  matrices of adequate sizes. \\
Notice that, if we  use the block Arnodi method \cite{elguen} to solve the Sylvester matrix equation (\ref{sylvnewt}), then   only matrix-block vectors products are needed. 
% Generally, this algorithm requires more iterations, as compared to the extended form to get good approximations. \\
\medskip 

\section{Applications to NDREs from transport theory}\label{trans}

 Nonsymmetric  differential  Riccati equations (\ref{ric1}) associated with M-matrices   appear  for example in neutron  transport theory; see \cite{abou,bellman,chandar}. The problem  to be solved is given as follows
 \begin{equation}\label{transp1}
 \dot X(t)= - (\Delta -eq^T)\, X - X (\Gamma - qe^T) +Xqq^T X + ee^T.
 \end{equation}

\noindent The matrices $\Delta$ and $\Gamma$  involved in the NDRE (\ref{transp1}) have the same dimension and are given by
\begin{equation}
\Delta = {\tt diag}(\delta_1,\ldots,\delta_n),\;\;\;\; \Gamma = {\tt diag}(\gamma_1,\ldots,\gamma_n),
\end{equation}
with
\begin{equation}
\delta_i = \displaystyle \frac{1}{c\omega_i(1+\alpha)}, \;\; {\rm and} \;\;\;
\gamma_i = \displaystyle \frac{1}{c\omega_i(1-\alpha)}, i=1,\ldots,n.
\end{equation}
The vectors $e$ and $q$ are given as follows
\begin{equation}
e=(1,\ldots,1)^T,\;\; q=(q_1,\ldots,q_n)^T\;\; {\rm with}\;\; q_i =\displaystyle \frac{c_i}{2\omega_i}, i=1,\ldots,n.
\end{equation}
The matrices and vectors above depend on the two parameters $c$ and $\alpha$ satisfying  
$0 < c \le 1$, $0 \le  \alpha < 1$, and on the sequences $(\omega_i)$ and $(c_i)$, $i=1, \ldots,n$, which are the nodes and weights of the Gaussian-Legendre  quadrature on $[0,\,1]$, respectively. They are such that 
$$ 0< \omega_n<\ldots < \omega_1<1, \, {\rm and } \, \displaystyle \sum_{i=1}^n c_i=1,\;\; c_i >0,\; i=1,\ldots,n.$$
The steady-state solutions of  \eqref{transp1} satisfy the following nonsymmetric algebraic Riccati equation 
\begin{equation}\label{transp2}
  - (\Delta -eq^T)\, X - X (\Gamma - qe^T) +Xqq^T X + ee^T=0. \;\;\; ({\rm NARE})
 \end{equation}
 
%Numerical solution to equation \eqref{transp2} have been investigated in \cite{bai1,bai2,bjs,ELMoallem,lin1,lu2} and the references therein.  
For existence of solutions for  NAREs \eqref{transp2}, we have the following result
.\begin{theorem}
\cite{juanglin}
If $c=1$ and $\alpha=0$, equation \eqref{transp2} has unique nonnegative solution. Otherwise, it has two nonnegative minimal and maximal  solutions, say $X_{min}$ and $X_{max}$ with $X_{max} > X_{min} >0$. The minimal solution $X_{min}$ is strictly increasing in $c$ for a fixed $\alpha$ and decreasing in $\alpha$ for fixed $c$. 
\end{theorem}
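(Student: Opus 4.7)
The plan is to recognize the transport-theory NARE \eqref{transp2} as the generic NARE \eqref{ric2} under the substitutions $A=\Delta-eq^T$, $D=\Gamma-qe^T$, $S=qq^T$, $Q=ee^T$, so that the block matrix $\mathcal{L}$ of \eqref{matM} is completely determined. The very first task is to verify that $\mathcal{L}$ is always an M-matrix, nonsingular precisely when $(c,\alpha)\neq(1,0)$ and singular (but irreducible) at the critical point $(1,0)$. This is obtained by a direct row-sum computation using the explicit formulas for $\delta_i,\gamma_i,q_i$ together with the normalization $\sum_i c_i=1$; one checks that the defect in the row sums of $\mathcal{L}$ vanishes exactly at $c=1,\alpha=0$, and is strictly positive otherwise.

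With the M-matrix structure in hand, existence of the minimal nonnegative solution $X_{\min}\ge 0$ follows from the monotone Picard iteration
\[
-A\,X^{(k+1)}-X^{(k+1)}D \;=\; -X^{(k)}SX^{(k)}-Q,\qquad X^{(0)}=0,
\]
which, by the M-matrix comparison principle used around \eqref{sol2}, is nondecreasing, bounded, and converges to $X_{\min}$. For the existence of a second, strictly larger solution $X_{\max}$ in the nonsingular regime, I would pass to the $2n\times 2n$ matrix
\[
\mathcal{H}\;=\;\begin{pmatrix}\Gamma-qe^T & -qq^T\\ ee^T & -(\Delta-eq^T)\end{pmatrix}
\]
and invoke the invariant-subspace characterization of Theorem \ref{radon}: when $\mathcal{L}$ is a nonsingular M-matrix, $\mathcal{H}$ has no purely imaginary eigenvalue and its spectrum splits into two groups of $n$ eigenvalues each, producing two distinct nonnegative graph subspaces with slopes $X_{\min}<X_{\max}$. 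At the critical value $(c,\alpha)=(1,0)$ these two invariant subspaces merge, since $\mathcal{H}$ acquires a zero eigenvalue of algebraic multiplicity two but geometric multiplicity one, forcing $X_{\min}=X_{\max}$ and yielding the announced uniqueness.

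For monotonicity of $X_{\min}$, note that $q$ does not depend on $(c,\alpha)$ and only $\Delta,\Gamma$ do. Raising $c$ with $\alpha$ fixed strictly decreases every $\delta_i$ and every $\gamma_i$, so it componentwise decreases the M-matrices $A$ and $D$. Since the Sylvester step in the Picard iteration inherits the inverse-isotonicity of M-matrices (smaller nonsingular M-matrix coefficients give pointwise larger nonnegative solution for a fixed nonnegative right-hand side), an induction on the iteration index followed by passage to the limit gives strict monotonicity of $X_{\min}$ in $c$. The case of $\alpha$ is more delicate because raising $\alpha$ decreases $\delta_i$ but increases $\gamma_i$; here I would differentiate the Picard iteration formally with respect to $\alpha$, exploit the rank-one structure of $eq^T$ and $qe^T$, and show that the Fr\'echet derivative of the iteration propagates a consistent nonpositive sign for $\partial X_{\min}/\partial\alpha$.

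The main obstacle I expect is the fine analysis at the critical point $(c,\alpha)=(1,0)$, where one must prove that exactly one nonnegative graph subspace of $\mathcal{H}$ persists, and that both the Picard iteration from below (producing $X_{\min}$) and a Newton-type iteration from above (producing $X_{\max}$) converge to the same limit as $(c,\alpha)\to(1,0)$ from the nonsingular side. This requires a careful perturbation analysis of the Jordan structure of $\mathcal{H}$ together with Perron-type arguments for the one-dimensional null space of the singular irreducible M-matrix $\mathcal{L}$. The secondary delicate point is the $\alpha$-monotonicity, whose sign must be extracted from a competition between the decrease of $\Delta$ and the increase of $\Gamma$ and cannot be settled by raw M-matrix comparison alone.
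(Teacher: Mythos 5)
First, note that the paper itself offers no proof of this statement: it is quoted from Juang--Lin \cite{juanglin}, so your proposal can only be measured against the argument in that reference. Against that standard, your plan has a genuine error at its foundation. You claim that the matrix $\mathcal{L}$ of \eqref{matM} is a nonsingular M-matrix precisely when $(c,\alpha)\neq(1,0)$, with singularity occurring only at the critical point. This is false: $\mathcal{L}$ is singular on the whole line $c=1$, for every $\alpha\in[0,1)$. Indeed, with $A=\Delta-eq^T$, $D=\Gamma-qe^T$, $S=qq^T$, $Q=ee^T$, take the positive vector $(u;v)$ with $u_i=q_i/\gamma_i=\tfrac{c\,c_i(1-\alpha)}{2}$ and $v_i=1/\delta_i=c\,\omega_i(1+\alpha)$; then $e^Tu+q^Tv=\tfrac{c(1-\alpha)}{2}+\tfrac{c(1+\alpha)}{2}=c$ because $\sum_i c_i=1$, and a direct computation gives
\[
\mathcal{L}\begin{pmatrix}u\\v\end{pmatrix}=\begin{pmatrix}(1-c)\,q\\(1-c)\,e\end{pmatrix}.
\]
Since $\mathcal{L}$ is an irreducible Z-matrix, this shows it is a nonsingular M-matrix if and only if $c<1$, and a singular irreducible M-matrix whenever $c=1$, regardless of $\alpha$. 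Consequently the dichotomy you build everything on --- ``nonsingular regime'' versus ``critical point'' --- misidentifies the critical set: for $c=1$, $\alpha>0$ the matrix $\mathcal{L}$ is singular and $\mathcal{H}$ has a (simple) zero eigenvalue, yet the theorem asserts two distinct positive solutions there. The coalescence $X_{\min}=X_{\max}$ is governed not by singularity of $\mathcal{L}$ but by the finer null-recurrence (zero drift) condition on the left and right null vectors of the singular $\mathcal{L}$, equivalently by the zero eigenvalue of $\mathcal{H}$ acquiring algebraic multiplicity two; only that condition singles out $\alpha=0$ on the line $c=1$.

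Beyond this, two steps are asserted rather than proved. The splitting of the spectrum of $\mathcal{H}$ into two groups of $n$ eigenvalues yields many graph invariant subspaces, but nothing generic about M-matrix NAREs guarantees that a \emph{second} one is the graph of a nonnegative, let alone positive, solution; the count ``exactly two positive solutions'' in \cite{juanglin} comes from the rank-one structure $S=qq^T$, $Q=ee^T$: rewriting the equation as $\Delta X+X\Gamma=(e+Xq)(e+X^Tq)^T$ forces $x_{ij}=u_iv_j/(\delta_i+\gamma_j)$ and reduces the whole problem to a scalar secular equation, whose positive roots one counts directly and which has a double root exactly at $(c,\alpha)=(1,0)$. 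That representation is also the mechanism behind the monotonicity statements. Your $c$-monotonicity argument via inverse-isotonicity of the Sylvester operator in the Picard iteration is sound in outline (both $\delta_i$ and $\gamma_i$ decrease in $c$), but your $\alpha$-monotonicity paragraph is a plan, not a proof: as you yourself observe, $\delta_i$ decreases while $\gamma_i$ increases in $\alpha$, and resolving that competition requires the explicit secular-equation representation rather than raw M-matrix comparison.
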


\noindent Equation \eqref{transp1} can be expressed as follows 
\begin{equation}\label{transp3}
 \dot X(t)+\Delta X +X\, \Gamma = eq^T\, X +qe^T+Xqq^T X + ee^T.
 \end{equation}
 Therefore, integrating \eqref{transp3}, we get the following expression of a solution of \eqref{transp1}. 
 \begin{equation*}
 X(t)=e^{-t\Delta} X_0 e^{-t\Gamma} +\int_0^t e^{-(t-\tau)\Delta}  \left [   ee^T+eq^T X(\tau) + X(\tau) qe^T +X(\tau) qq^TX(\tau) \right] e^{-(t-\tau)\Delta}  d\tau.
  \end{equation*}
The global existence of a solution of  equation \eqref{transp1} was invetigated in \cite{juang,Reid} and this is stated in the following theorem 
\begin{theorem}
\cite{juang}
Let $0<c  \le 1$, $0 \le \alpha <1$. Assume that $0 \le X_0 \le X_{min}$ and $ee^T-\Delta X_0-X_0 \Gamma \ge 0$. Then a global solution $X(t)$ of \eqref{transp1}  exists and  is nondecreazing in $t$ on $[0,\, \infty[$. Futhermore,  $$\displaystyle \lim_{t \longrightarrow \infty} X(t) = X_{min},$$
where $X_{min}$ is the minimal solution of the nonsymmetric algebraic Riccati equation \eqref{transp2}.
\end{theorem}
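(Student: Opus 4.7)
The plan has three ingredients, handled in order: (i) the a priori bound $0\le X(t)\le X_{min}$ together with global existence, (ii) monotonicity $\dot X(t)\ge 0$ via a Sylvester-type linear ODE for $Y=\dot X$, and (iii) identification of the limit with $X_{min}$.

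For (i) I would work with the integral reformulation obtained from \eqref{transp1} by variation of parameters,
\begin{equation*}
X(t) = e^{-t\Delta} X_0 e^{-t\Gamma} + \int_0^t e^{-(t-\tau)\Delta}\bigl[\, ee^T + eq^T X(\tau) + X(\tau) qe^T + X(\tau) qq^T X(\tau)\bigr] e^{-(t-\tau)\Gamma}\, d\tau,
\end{equation*}
which defines an integral operator $\mathcal F$. Since $\Delta,\Gamma$ are diagonal with positive entries, $e^{-s\Delta}$ and $e^{-s\Gamma}$ are entrywise nonnegative; the factorisation $Z_2 qq^T Z_2 - Z_1 qq^T Z_1 = Z_2 qq^T (Z_2-Z_1) + (Z_2-Z_1) qq^T Z_1$ valid for $0\le Z_1\le Z_2$ shows that $\mathcal F$ is entrywise monotone on the cone of nonnegative matrices. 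Starting the Picard iteration from $X^{(0)}\equiv 0$, the iterates are nondecreasing in $k$. Using $X_0\le X_{min}$ together with the algebraic identity $\Delta X_{min}+X_{min}\Gamma = ee^T+eq^T X_{min}+X_{min}qe^T+X_{min}qq^T X_{min}$ satisfied by $X_{min}$, a short computation gives $\mathcal F(X_{min})(t) = X_{min}-e^{-t\Delta}(X_{min}-X_0)e^{-t\Gamma}\le X_{min}$, so induction yields $X^{(k)}(t)\le X_{min}$ for all $k,t$. Monotone bounded convergence then produces a nonnegative global solution with $0\le X(t)\le X_{min}$ on $[0,\infty)$.

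For (ii), set $Y(t):=\dot X(t)$ and differentiate \eqref{transp1}:
\begin{equation*}
\dot Y = -A(t)Y - Y D(t),\qquad A(t) := \Delta - eq^T - X(t) qq^T,\quad D(t) := \Gamma - qe^T - qq^T X(t).
\end{equation*}
The matrices $-A(t)$ and $-D(t)$ are Metzler: the diagonals $\Delta,\Gamma$ contribute nothing off the diagonal, while $eq^T, qe^T, X(t) qq^T, qq^T X(t) \ge 0$ thanks to $X(t)\ge 0$ established in (i). Hence the fundamental matrices $\Phi_1,\Phi_2$ of $\dot\Phi_1=-A\Phi_1$ and $\dot\Phi_2=-D^T\Phi_2$ with $\Phi_i(0)=I$ are nonnegative, so $Y(t) = \Phi_1(t) Y(0) \Phi_2(t)^T \ge 0$ provided $Y(0)\ge 0$. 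But
\begin{equation*}
Y(0) = \dot X(0) = \bigl(ee^T - \Delta X_0 - X_0\Gamma\bigr) + eq^T X_0 + X_0 qe^T + X_0 qq^T X_0,
\end{equation*}
is nonnegative by the hypothesis $ee^T-\Delta X_0-X_0\Gamma\ge 0$ and nonnegativity of $X_0,e,q$; hence $\dot X(t)\ge 0$ for all $t\ge 0$.

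For (iii), $X(t)$ is entrywise nondecreasing and bounded above by $X_{min}$, hence converges to some $X_\infty$ with $X_0\le X_\infty\le X_{min}$. Continuity of the right-hand side $f$ of \eqref{transp1} gives $\dot X(t) = f(X(t)) \to f(X_\infty)$; any nonzero (necessarily nonnegative, by $\dot X\ge 0$) entry of $f(X_\infty)$ would force the corresponding entry of $X$ to diverge, contradicting convergence, so $f(X_\infty)=0$ and $X_\infty$ is a nonnegative solution of \eqref{transp2}. Minimality of $X_{min}$ forces $X_\infty\ge X_{min}$, which combined with $X_\infty\le X_{min}$ yields $X_\infty=X_{min}$. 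The main obstacle is the upper bound in step (i): the quadratic term $Xqq^T X$ is not monotone in $X$ in general, so both the monotonicity of $\mathcal F$ and the comparison with $X_{min}$ rest on the bilinear factorisation above and on the algebraic identity satisfied by $X_{min}$; once these are in place, everything else follows essentially from the Metzler structure of the coefficient matrices.
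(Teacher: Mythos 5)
Your argument is correct, but note that the paper itself gives no proof of this theorem: it is quoted verbatim from \cite{juang}, so there is no internal argument to compare against. What you have written is essentially the standard monotone-iteration proof from the cited literature (\cite{juang}, and \cite{fguo} for the general NDRE), and it is consistent with the two ingredients the paper does display, namely the variation-of-parameters integral form of \eqref{transp1} stated just above the theorem (where, incidentally, the paper's right-hand kernel should read $e^{-(t-\tau)\Gamma}$, as you have it) and the Picard-iteration remark of Section~2. All three of your steps check out: the bilinear factorisation $Z_2qq^TZ_2-Z_1qq^TZ_1=Z_2qq^T(Z_2-Z_1)+(Z_2-Z_1)qq^TZ_1$ does make the integral operator order-preserving on the nonnegative cone; the identity $\mathcal{F}(X_{min})(t)=X_{min}-e^{-t\Delta}(X_{min}-X_0)e^{-t\Gamma}$ follows from the algebraic equation \eqref{transp2} by integrating $\frac{d}{d\tau}\bigl[e^{-(t-\tau)\Delta}X_{min}e^{-(t-\tau)\Gamma}\bigr]$ and yields the a priori bound $0\le X(t)\le X_{min}$; the Metzler structure of $-(\Delta-eq^T-Xqq^T)$ and $-(\Gamma-qe^T-qq^TX)$ (which uses $X\ge 0$ from the first step) gives nonnegative transition matrices and hence $\dot X\ge 0$ once $\dot X(0)\ge 0$; and the identification of the limit via minimality of $X_{min}$ is standard. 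In a final write-up you should only spell out two routine points you currently pass over: that the monotone, bounded Picard iterates converge to a continuous fixed point of $\mathcal{F}$ (monotone convergence in the integral, then continuity of $\mathcal{F}$ applied to a bounded function), which by local Lipschitz uniqueness is the solution of \eqref{transp1} extended globally; and the fact that the state-transition matrix of a time-varying linear system with continuous Metzler coefficient is entrywise nonnegative.
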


\noindent To obtain low rank approximate solutions to \eqref{transp1}, we first apply the extended Arnoldi process to the pairs $(A,e)$ and $(D,e)$ where $A=\Delta -eq^T$ and $D=\Gamma - qe^T$ to get orthonormal bases that will be used to construct the desired low rank approximation $X_m(t)= {\cal V}_m Y_m(t) {\cal W}_m^T$ where $Y_m$ solves the low dimensional differential Riccation equation \eqref{lowric}. 
We notice that when applying the above method, we use matrix vector operations of the form $A^{-1}v$ and $D^{-1}v$. As the matrices $A$ and $D$  are  the sum of diagonal matrices  and   rank one matrices, then to reduce the costs, we can compute easily these quantities by using the Sherman-Morrison-Woodbury formula given by
\begin{equation*}
A^{-1} v={ (\Delta - eq^T)}^{-1}v= \Delta^{-1}v + \displaystyle \frac{\Delta^{-1}e\, q^T\, \Delta^{-1}v}{1-q^T\, \Delta^{-1}e},
\end{equation*}
and a similar relation for $D^{-1}v$.\\

\section{Numerical examples}
The experimental tests reported in this section illustrate the methods introduced in this work. We considered the differential nonsymmetric Riccati equation applied to transport theory \eqref{transp1} on a time interval $[t_0,t_f]$, for different values of the parameters $\alpha$ and $c$, and for several sizes. The initial condition was chosen as $X_0=Z_{0,1}Z_{0,2}^T$, where $Z_{0,1}=Z_{0,2}=O_{n \times 1}$. All the experiments were performed on  an  Intel Core i7 processor  laptop equipped with 8GB of RAM. The algorithms were coded in Matlab R2014b. The three considered methods in this work are:

%\begin{itemize}
%\item
- The BDF-BA-Newton method which is based on the application of a BDF(s) integration scheme to the original equation which implies, at each timestep, the resolution of the algebraic nonsymmetric Riccati equation  \eqref{ricbdf}. The latter equation is then solved by the Newton method. The numerical resolution of the Sylvester equations that need to be solved at each iteration of the Newton method is done by a Block Arnoldi method, as the coefficient matrices can be singular or ill-conditioned, impeding the use of the extended block Arnoldi algorithm.   \\

- The EBA-BDF(s) and EBA-exp methods  which consist in projecting the differential problem onto an extended Arnoldi subspace and then solve the projected nonsymmetric differential Riccati equation by a BDF method (EBA-BDF(s) method) or using the exponential method  by a quadrature method as described in section 3.2  (EBA-exp). The alternative consisting in using a Rosenbrock method instead of the BDF scheme was not useful in our experiments as it did not perform better than the BDF1. The Frobenius norm of the residual at final time is then computed and while the tolerance is not met, we repeat the process increasing the dimension of the projection subspace.  The computation of the exponential form of the solution is known for being Regarding the EBA-exp method, the Davison Maki algorithm is known to be numerically unstable and we had to use the modified Davison-Maki method to overcome this drawback, see \cite{DavMaki73} for more details.\\
For the extended block Arnoldi algorithm, the  stopping criterion was $$\| R(X_m) \|_F \, /
\,  \| F\,G^T \|_F < \, 10^{-10},$$
where the norm of the residual  $\| R_m(t_f) \|$ was computed by using Theorem \ref{th2}. For the Newton-Block Arnoldi, the  iterations were  stopped when
$$\parallel X_{k+1} - X_k \parallel_F / \| X_k\|_F < 10^{-10}.$$
%\end{itemize} 

\noindent {\bf Example 1.} In order to confirm that the numerical methods presented in this work produce reliable approximations, we compared their outputs to the solution  $X^{direct}(t)$ computed by the direct exponential  method  as described in Section 2, \eqref{vp}. As this direct approach is not suitable for large sized problems, we set the dimension of the problem to $n=40$. The choice of the parameters values  was $c=0.5$ and $\alpha=0.5$.  In Figure 6.1, we plotted the curves of the first component $X_{11}(t)$ for  EBA-BDF1 and for the direct exponential method on the time interval $[0,10]$.
%\newpage

\noindent Figure 6.2 shows that the solution of the DNRE tends to the minimal nonnegative solution $X^*$ of the algebraic nonsymmetric equation \eqref{ric2} associated to \eqref{ric1} when $t$ tends to infinity. In this figure, we plotted the errors $\Vert X_{11}^{EBA-BDF1}-X_{11}^{*} \Vert$ and $\Vert X_{11}^{EBA-exp}-X_{11}^{*} \Vert$ corresponding the the first coefficients.
\newpage

\begin{figure}[H]
	\label{fig11}
	\begin{center}
		\includegraphics[width=7cm,height=5cm]{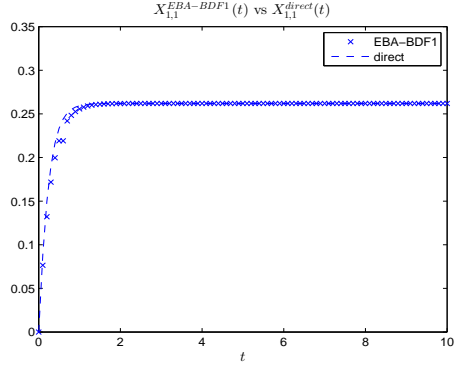}
		\caption{ First components $X_{11}(t)$, $t \in [0,10]$}
	\end{center}
\end{figure}

\begin{figure}[H]
	\label{figerr}
	\begin{center}
		\includegraphics[width=7cm,height=5cm]{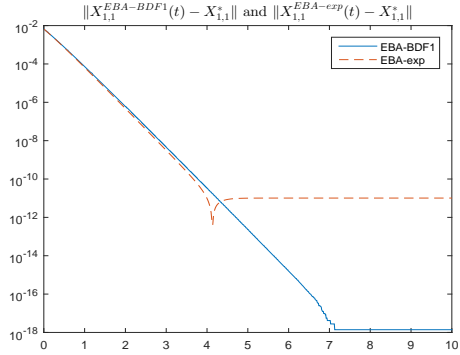}
		\caption{ Errors, corresponding to the first coefficient}
	\end{center}
\end{figure}

\noindent {\bf Example 2.}  For this example, we set  $c=0.5$ and $\alpha = 0.5$. We first computed the approximations $X_{EBA-BDF1}(t)$ , $X_{EBA-exp}(t)$ and $X_{BDF1-BA-n}(t)$ given by the EBA-BDF1, EBA-exp and BDF1-Newton-BA methods for the size $n=1000$, on the time interval $[0,1]$, for a timestep $dt=0.01$ for the BDF1 integration scheme. The relative Frobenius error norms at final time $t_f=1$ were of order $10^{-10}$ between the results of EBA-BDF1 and BDF1-BA-Newton methods whereas the EBA-exp did not performed as well with a relative error of order $10^{-4}$ when compared to both  EBA-BDF1 and BDF1-BA-Newton methods.  This problem was expected as the modified Davison-Maki requires a large number of steps in order to converge, leading to some loss of accuracy. \\
We considered problems  with the following sizes $n= 4000$,  $n = 10000$, $n = 20000$ and $n = 40000$. In Table  \ref{tab1}, we listed the obtained relative residual norms (Res.) at final time for each method and the corresponding CPU time (in seconds).
For all the experiments, the outer iterations in the Newton method did not exceed $10$ iterations. The maximum number of inner iterations was $itermax = 50$ and   were  stopped when  the corresponding residual was less than $tol = 10^{-12}$. In order to spare some computation time, the BDF1 or exponential method were performed every 5 Arnoldi iterations.

\begin{table}[h!]
	\caption{Results for the transport case $c=0.5$ and $\alpha = 0.5$.}
	\label{tab1}
	\begin{center}
	\begin{tabular}{c|c|c|c}
		\hline
		& EBA--BDF1  & EBA-Exp & BDF1-Newton-BA \\
		\hline
		\;\;$n$  & Res. \;\;\;\;\;   time & Res.  \; \;\; \;\; time  & Res.  \; \;\; \;\; time  \\
		\hline
		$4000$  &$3.9 \cdot \,10^{-9}$\; \;\;${\bf 2.9s}$ &  $4.7 \cdot 10^{-8}$\; \;\;$186s$ &  $3.9 \cdot 10^{-9}$\; \;\;$1293.4s$\\
		$10000$  &$1.1  \cdot \,10^{-8}$\; \;\;${\bf 4.4s}$ & $1.1 \cdot 10^{-8}$\; \;\;$330s$&  $--$\; \;\;$--s$  \\
		$20000$   &$2.4 \cdot \,10^{-8}$\; \;\;${\bf 7.6s}$ &  $--$\; \;\;$--s$&  $--$\; \;\;$--s$   \\
		$40000$   &$2.3\cdot \,10^{-8}$\; \,  ${\bf 12.8s}$ &$--$ \;\;  $--s$&  $--$\; \;\;$--s$  \\
		\hline
	\end{tabular}
\end{center}
	%\caption{Results for the transport example with $c=0.5$ and $\alpha = 0.5$.}
	\end{table}
\noindent The results in Table \ref{tab1} show that the EBA-BDF1 method performs better than the other approaches, although all achieved satisfactory accuracies even though the EBA-exp method was not as interesting from a practical point of view. This is probably caused by the fact that the modified Davison-Maki algorithm needed a large number of sub-steps in order to converge (1000 sub-steps for the $n=4000$ case). As the number of sub-steps increases with the size of the problem, the EBA-exp could not handle the largest cases of this example.\\

\noindent {\bf Example 3.}  In this example, we repeated the tests of Example 2, for $c=0.9999$ and $\alpha=10^{-8}$. As in the previous example, the results showed a clear advantage for the methods based on the extended block Arnoldi algorithm, which are well designed for this problem. Indeed, the computations of the inverses of  the matrices $A$ and $D$ (and the forms derived from the application of the BDF integration scheme) do not require important computational efforts. \\
In Figure 6.3, we plotted the relative Frobenius residual norm of the approximate solution $X_{EBA-BDF1}(t_f)$ at final time $t_f=1$ in function of the number of extended block Arnoldi iterations for the problem size $n=4000$.

\begin{figure}[h!]
	\label{fig3}
	\begin{center}
		\includegraphics[width=7.7cm,angle=0]{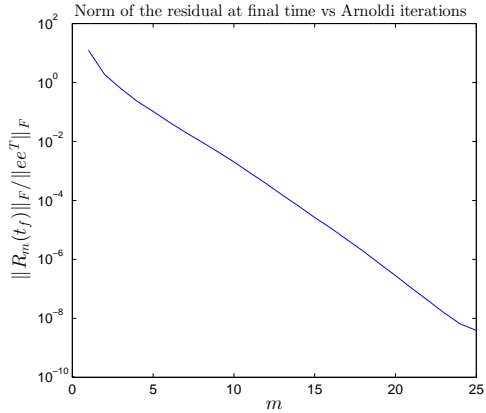}
		\caption{ Relatives Frobenius residual norms \textit{vs} the number of extended block Arnoldi iterations $m$.}\label{fig2}
	\end{center}
\end{figure}

\begin{table}[h!]
	\caption{Results for the transport case $c=0.9999$ and $\alpha = 10^{-8}$.}
	\label{tab2}
	\begin{center}
		\begin{tabular}{c|c|c|c}
			\hline
			& EBA--BDF1  & EBA-Exp & BDF1-Newton-BA \\
			\hline
			\;\;$n$  & Res. \;\;\;\;\;   time & Res.  \; \;\; \;\; time  & Res.  \; \;\; \;\; time  \\
			\hline
			$4000$  &$3.6 \cdot \,10^{-9}$\; \;\;${\bf 3.4s}$ &  $5.7 \cdot 10^{-8}$\; \;\;$183s$ &  $3.9 \cdot 10^{-9}$\; \;\;$1204.1s$\\
			$10000$  &$8.1  \cdot \,10^{-9}$\; \;\;${\bf 5.5s}$ & $4.1 \cdot 10^{-8}$\; \;\;$341s$&  $--$\; \;\;$--s$  \\
			$20000$   &$2.2 \cdot \,10^{-9}$\; \;\;${\bf 8.9s}$ &  $--$\; \;\;$--s$&  $--$\; \;\;$--s$   \\
			$40000$   &$2.3\cdot \,10^{-9}$\; \,  ${\bf 14.9s}$ &$--$ \;\;  $--s$&  $--$\; \;\;$--s$  \\
			\hline
		\end{tabular}
	\end{center}
\end{table}
The results displayed in Table \ref{tab2} confirm the good behaviour of the EBA-BDF1 method in terms of accuracy and computation time.\\

\noindent {\bf Example 4.} For this experiment, we considered the  low rank nonsymmetric differential Riccati equation (NDRE) given in 
(\ref{ric1}), for the special case (see \cite{Guo2001}) $$A=D=\begin{pmatrix} 
2&-1&&&\\
&2&\ddots&&\\
&&\ddots&&-1\\
-1&&&&2
\end{pmatrix} \;    {\rm and} \;S=diag(1,1,0,\dots,0)  \in \mathbb{R}^{n \times n}$$
The coefficients of matrices $F\in \mathbb{R}^{n\times 2}$ and $G \in \mathbb{R}^{n\times 2}$ were randomly generated. 
 In Table \ref{tab4}, we reported the obtained residual norms and the CPU times for the  EBA-BDF1  and  EBA-exp methods for various values of $n$, as the BDF-BA-Newton method is too slow to be an interesting choice in this case. In this special case, the EBA-exp method could be handled  by using the direct Davison-Maki algorithm. Both presented approaches produced equally satisfactory performances.

\begin{table}[h!]
	\caption{Results for Example 4.}
	\label{tab4}
		\begin{center}
	\begin{tabular}{l|l|l}
		\hline
		& EBA-BDF1  & EBA-exp \\
		\hline
		\;\;$n$, $p$  & Res. \;\;\;\;\;\;\;\;\;\;\; time & Res.  \;\;\;\;\;\;\;\;\;\;\;\;  time  \\
		\hline
%		$n=400$, &&\\
%		$p=225$&$3.7 \cdot \,10^{-12}$\; \;\;\;${ 0.6s}$ &  $6.5 \cdot 10^{-12}$\; \;\;\;$1.8s$  \\
		\hline
		
		$n=p=500$  &$7.2  \cdot \,10^{-10}$\; \;\;\;${ 0.18s}$ & $8.5 \cdot 10^{-10}$\; \;\;\;$0.08s$\\
		\hline
		
		$n=p=5000$   &$3.4 \cdot \,10^{-9}$\;\; \;\;\;${ 4.2s}$ &  $3.6 \cdot 10^{-9}$\; \;\;\;\;$3.9s$  \\
		\hline
		$n=p=10000$   &$8.6 \cdot \,10^{-9}$\;\; \;\;\;${ 20.0s}$ &  $3.9 \cdot 10^{-9}$\; \;\;\;\;$18.5s$  \\
		\hline
	\end{tabular}
	\end{center}
\end{table}

\section{Conclusion}
In this paper, we considered large-scale nonsymmetric differential Riccati equations, especially in the case arising from transport theory. We considered two approaches based on the projection of the differential equation onto an extend block Arnoldi subspace, followed by an integration scheme (BDF or exponential form via the Davison-Maki method, or its modified version). Both methods produce low rank approximates to the solution of the initial problem. We also presented an approach based on the application of the BDF scheme to the initial problem, leading to the resolution of algebraic Riccati equations which are solved by a Newton-block Arnoldi method. All three methods were able to achieve an approximate solution although the EBA-BDF1 performed better in terms of computational time. The EBA-exp method  suffered from some numerical instability which could be handled to the detriment of computational time. We reported some numerical experiments comparing those approaches for large scale problems.

% Non-BibTeX users please use
\bibliographystyle{plain}
\end{document}